\numberwithin{equation}{section}
\newtheorem{thm}{Problem}
\begin{document}

\title{AI for Mathematics: Progress, Challenges, and Prospects}


\author[1]{Haocheng Ju}
\author[2,3,4,5]{Bin Dong\thanks{Corresponding author}}

\affil[1]{School of Mathematical Sciences, Peking University, Beijing 100871, China}
\affil[2]{Beijing International Center for Mathematical Research, Peking University, Beijing 100871, China}
\affil[3]{Center for Machine Learning Research, Peking University, Beijing 100871, China}
\affil[4]{Center for Intelligent Computing, Great Bay Institute for Advanced Study, Great Bay University, Dongguan 523000, China}
\affil[5]{Zhongguancun Academy, Beijing 100094, China}

\maketitle


 {\begin{center}
\parbox{14.5cm}{\begin{abstract}
AI for Mathematics (AI4Math) has emerged as a distinct field that leverages machine learning to navigate mathematical landscapes historically intractable for early symbolic systems. While mid-20th-century symbolic approaches successfully automated formal logic, they faced severe scalability limitations due to the combinatorial explosion of the search space. The recent integration of data-driven approaches has revitalized this pursuit. In this review, we provide a systematic overview of AI4Math, highlighting its primary focus on developing AI models to support mathematical research. Crucially, we emphasize that this is not merely the application of AI to mathematical activities; it also encompasses the development of stronger AI systems where the rigorous nature of mathematics serves as a premier testbed for advancing general reasoning capabilities. We categorize existing research into two complementary directions: problem-specific modeling, involving the design of specialized architectures for distinct mathematical tasks, and general-purpose modeling, focusing on foundation models capable of broader reasoning, retrieval, and exploratory workflows. We conclude by discussing key challenges and prospects, advocating for AI systems that go beyond facilitating formal correctness to enabling the discovery of meaningful results and unified theories, recognizing that the true value of a proof lies in the insights and tools it offers to the broader mathematical landscape.

\end{abstract}}\end{center}}


\section{Introduction}

The automation of mathematical reasoning has been a central objective of artificial intelligence (AI) since its inception. The dream of mechanizing reasoning predates the digital computer, tracing back to the 1920s when David Hilbert proposed a program to formalize all of mathematics, aimed at proving every theorem within a consistent axiomatic system. While this optimism was theoretically challenged by Kurt Gödel’s incompleteness theorems in 1931 \cite{godel1931formally}, which demonstrated that any sufficiently rich formal system is necessarily incomplete, Gödel’s results did not close the subject. As noted by Ulam \cite{bams/1183522369}, von Neumann viewed these discoveries as a prompt to rethink the role of formalism rather than abandon it. Later research has further shown that large portions of classical mathematics admit finitistic reductions \cite{simpson1988partial}, keeping the dream of partial mechanization alive.

With the advent of digital computers, these theoretical ideas were translated into practice. In the 1950s, Martin Davis implemented a decision procedure for Presburger arithmetic \cite{Clarke_1966,Davis1983ThePA}, arguably the first instance of a computer mechanically verifying logical statements. Shortly after, Newell, Simon, and Shaw developed the ``Logic Theorist'' \cite{1056797}, widely considered the first true AI program, which performed theorem proving in symbolic logic. A particularly influential milestone in this symbolic era was Wen-Tsun Wu’s method of mechanical theorem proving in geometry \cite{wen1986basic,wu2012mechanical}. By translating geometric problems into systems of algebraic equations and applying the characteristic set method, Wu demonstrated that complex logical consequences could be derived algorithmically.

However, most of these symbolic methods faced a critical bottleneck: the combinatorial explosion of the search space. As proofs grew in complexity, the number of possible logical paths expanded exponentially, rendering exhaustive search intractable. In recent decades, the integration of machine learning has revitalized this pursuit, offering data-driven approaches to navigate these complex landscapes. Since the 2010s, connectionist AI \footnote{Connectionist AI is an AI paradigm inspired by the interconnected structure of neurons in the human brain. It enables perception, reasoning, and decision-making through multi-layer networks of nodes that learn patterns from data by optimizing connection weights.} has risen to prominence, achieving remarkable success in computer vision and natural language processing. Mathematicians have subsequently begun leveraging these models to identify patterns that guide human intuition \cite{davies2021advancing,hashemi2025can,dong2024machine}, to construct counterexamples via reinforcement learning (RL) \cite{wagner2021constructions,Gukov_2025,shehper2025what}, and to train neural theorem provers \cite{whalen2016holophrasm,bansal2019holist}. More recently, the rapid progress of large language models (LLMs) has enabled the generation of new mathematical constructions \cite{romera2024mathematical, novikov2025alphaevolve,georgiev2025mathematical}, autoformalization \cite{wu2022autoformalization,mathinc_gauss}, and collaborative theorem proving \cite{zhou2025solving}.

This convergence has given rise to the interdisciplinary field of AI for Mathematics (AI4Math). We emphasize that AI4Math is not merely the application of AI tools to mathematical tasks; it also encompasses the development of stronger AI systems where the rigorous nature of mathematics serves as a premier testbed for advancing general reasoning capabilities. Broadly speaking, as illustrated in Figure \ref{fig:landscape}, research in this field can be categorized into two complementary directions:

\begin{itemize}
    \item \textbf{Problem-Specific Modeling:} This involves the design of specialized architectures tailored to specific research questions or narrow classes of problems, such as guiding intuition in knot theory or reasoning within closed geometric systems. Beyond their high effectiveness for targeted tasks, these models typically require significantly less data and computational resources, making them accessible to a broader range of researchers. However, they rarely transfer to other domains without substantial modification.
    \item \textbf{General-Purpose Modeling:} This focuses on the development of foundation models, ranging from specialized math-specific language models to general-purpose reasoning engines, designed to support broader workflows across diverse mathematical areas. While offering versatility, these approaches demand massive training datasets, substantial computational resources, and significant engineering expertise. Furthermore, they may not achieve the same degree of specificity or effectiveness as specialized models when applied to narrowly defined mathematical problems. This category encompasses advances in natural language reasoning, the bridging of informal and formal mathematics through autoformalization, and the construction of agentic systems capable of automated theorem proving and information retrieval.
\end{itemize}

It is worth noting that the scope of AI4Math technically extends beyond logical reasoning to include AI for Computational Mathematics and Scientific Computing. This domain focuses on building AI models to assist in numerical computations, such as for solving partial differential equations (PDEs), optimization, and inverse problems. The roots of this approach trace back to the 1980s and 1990s, where researchers explored the use of shallow neural networks to approximate solutions to differential equations. However, the field experienced a renaissance around 2015 with the advent of deep learning. While these computational advances constitute a vital pillar of the broader AI4Math landscape, this review restricts its focus to mathematical reasoning—encompassing discovery, formalization, and proof. We refer readers interested in the computational aspects of AI4Math to other comprehensive surveys \cite{arridge2019solving,karniadakis2021physics,brunton2024promising,weinan2021dawning,dong2022mathematical,chen2022learning}.

In this paper, we provide a systematic overview of the progress, challenges, and prospects of AI4Math. Our goal is not to be exhaustive, but to highlight representative works that illustrate the field's evolution. Readers interested in specific subareas may consult existing surveys on automated reasoning in geometry \cite{chou2001automated,wu2007mathematics} or deep learning for theorem proving \cite{li2024a}. Section \ref{sec:psm} examines problem-specific modeling, while Section \ref{sec:grm} reviews general-purpose modeling. Finally, Section \ref{sec:cp} discusses the key challenges ahead, advocating for systems that move beyond simple verification toward the discovery of deep mathematical insights.

\begin{figure}[tb]
\centering
\includegraphics[width=0.95\linewidth]{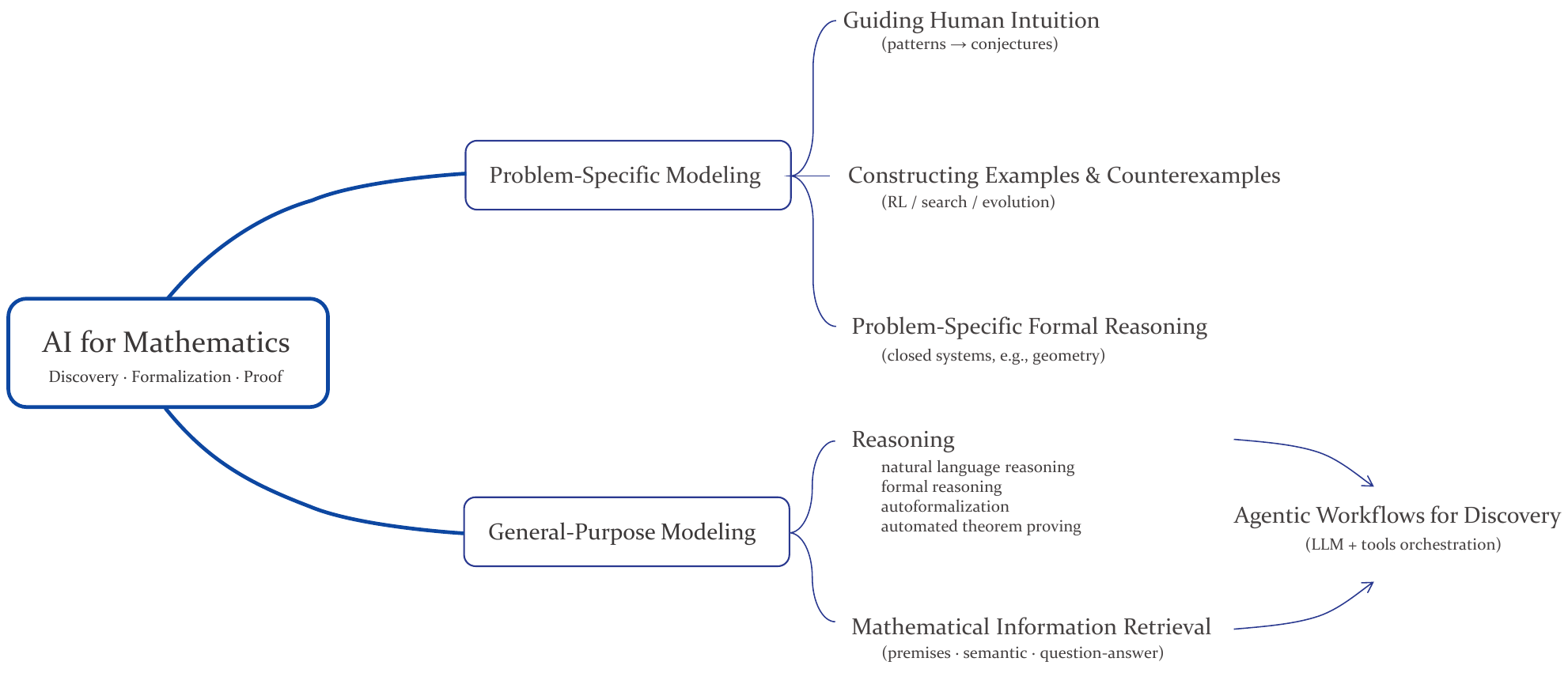}
\caption{Landscape of AI4Math research directions. AI4Math research can be broadly divided into two complementary paradigms: \textit{problem-specific modeling} and \textit{general-purpose modeling}. Problem-specific modeling encompasses three major lines of work: using machine learning to guide human intuition, constructing examples and counterexamples, and performing formal reasoning in closed systems. General-purpose modeling focuses on mathematical reasoning and mathematical information retrieval; together, these capabilities underpin emerging agentic workflows for mathematical discovery. This diagram is intended to organize and clarify conceptual relationships among research directions, rather than to provide a one-to-one correspondence with the sections of this review.}
\label{fig:landscape}
\end{figure}

\section{Problem-Specific Modeling}\label{sec:psm}
With the rapid advancement of data-driven techniques, researchers started to design specialized machine learning models tailored to specific mathematical research problems. These efforts generally fall into three distinct directions: identifying patterns in high-dimensional data to guide human intuition and inspire new conjectures; constructing examples or counterexamples to rigorously test or disprove mathematical hypotheses; and performing formal reasoning within closed axiomatic systems, such as Euclidean geometry. In this section, we review recent developments across these three areas and discuss their respective advantages and limitations.

\subsection{Guiding Human Intuition through Machine Learning}

One of the earliest examples of utilizing machine learning to assist in formulating mathematical conjectures is \cite{carifio2017machine}. In this study, the authors employed linear regression to predict the rank of the geometric gauge group across a large ensemble of F-theory geometries, successfully rediscovering an existing conjecture regarding gauge group rank. Building on this, they applied logistic regression to a classification problem involving $E_6$ gauge groups; through attribution analysis of the model, they formulated a genuinely new conjecture that was subsequently proved by human mathematicians.

However, the pivotal study that demonstrated the broad potential of deep learning in mathematical research is \cite{davies2021advancing}. The central contribution of this work is a systematic framework designed to accelerate the conjecture generation process—traditionally a time-consuming cycle where mathematicians hypothesize relationships, attempt proofs, and refine their ideas iteratively. In the workflow proposed by \cite{davies2021advancing}, mathematicians begin by hypothesizing that a relationship may exist between two mathematical objects. A specially designed neural network is then trained to predict one quantity from the features of the other. Attribution methods are subsequently used to identify the most influential input components, thereby guiding mathematicians toward more precise and refined conjectures. This cycle repeats until a mathematically meaningful statement is formulated. Using this approach, the authors discovered a new relationship between algebraic and geometric invariants in knot theory \cite{davies2024signature} and proposed a candidate algorithm predicted by the combinatorial invariance conjecture for symmetric groups \cite{blundell2022towards}.

Inspired by this paradigm, \cite{dong2024machine} designed another AI-guided intuition framework to the study of \textit{affine Deligne--Lusztig varieties (ADLV)}. This work not only independently rediscovered the classical virtual dimension formula in arithmetic geometry but also established a novel, precise lower-bound theorem. By providing a quantitative characterization that fills an important theoretical gap, this result demonstrates the efficacy of \textit{AI-guided intuition} in facilitating rigorous discoveries within deep areas of pure mathematics.

Beyond refining conjectures, machine learning has also proven effective in uncovering entirely new mathematical phenomena. A notable example is \cite{he2025murmurations}, where the authors represented elliptic curves as vectors and trained a logistic regression classifier to distinguish curves of different ranks. To interpret the classifier's strong performance, the authors performed Principal Component Analysis (PCA) on the vector representations. Upon plotting the averages of Frobenius traces for elliptic curves of fixed rank over a given conductor range, they revealed a surprising oscillatory pattern, which they termed ``murmurations." This phenomenon has since been the subject of in-depth theoretical study \cite{zubrilina2025murmurations,lee2025murmurations,bober2025murmurations}. A growing body of literature continues to leverage machine learning to discover or rediscover relationships across various domains \cite{carifio2017machine,KLAEWER2019438,PhysRevD.103.126014,doi:10.1142/S0217751X25500976,PhysRevD.105.066002,Lee01052025,doi:10.1142/S2810939223500016,PhysRevD.110.046016,shalyt2024unsupervised,lanard2025algorithm,lee2025machines}.

\subsection{Constructing Examples and Counterexamples}

One of the pioneering works to utilize machine learning for constructing examples and counterexamples is \cite{wagner2021constructions}. In this study, the author encodes graphs as 0--1 sequences and applies the deep cross-entropy method from RL to search for graph constructions that serve as counterexamples to existing conjectures. Following this precedent, RL has been adopted for a variety of structural problems. For instance, \cite{berczi2023ml} models the Hironaka game, a framework central to the resolution of singularities in algebraic geometry, as a Markov decision process (MDP). By combining Monte Carlo tree search with Deep Q-Networks, the authors successfully trained an agent to replace singular points with smooth ones, achieving near-optimal resolutions for du Val singularities. Similarly, \cite{shehper2025what} investigates the \textit{Andrews--Curtis (AC)} conjecture. After first employing classical search algorithms to identify AC triviality for infinite subfamilies of the \textit{Miller--Schupp} series and to obtain length reductions in the \textit{Akbulut--Kirby} series, the authors formulate the general problem as an MDP. They trained an RL agent on problem instances of varying difficulty, ultimately discovering two novel AC trivializations of balanced presentations that had eluded classical search methods.

Beyond RL, a diverse array of other machine learning techniques has been proposed for mathematical construction. \cite{alfarano2024global} trains a Transformer model on synthetic data to predict Lyapunov functions for stable dynamical systems; the trained model is then used to discover novel Lyapunov functions for non-polynomial systems. In combinatorics, \cite{charton2024patternboost} introduces an iterative \textit{bootstrapping} procedure. This method begins by generating candidate constructions via a local search algorithm, trains a neural network on the highest-scoring candidates, and then samples new seeds from the network to initialize the next round of local search. Using this approach, the authors successfully discovered a counterexample to a 30-year-old conjecture. \cite{bérczi2026flowbasedextremalmathematicalstructure} proposes a geometry-aware conditional flow-matching model for continuous geometric optimization problems, interleaving flow integration with projection onto the constraint manifold. The method initializes a training set using stochastic relaxation with perturbations, trains a conditional flow-matching model with geometric penalties, and employs geometry-aware sampling together with reward-guided policy optimization to iteratively refine configurations via local search. This approach discovers configurations that match or exceed the best known results in problems such as circle packing maximizing the sum of radii and the Heilbronn triangle problem. Additionally, \cite{BERGLUND2024138504} applies genetic algorithms to generate reflexive polytopes in dimensions 2 through 5, identifying several previously unknown polytopes in dimension 5. Other works leveraging machine learning for the construction of examples or counterexamples include \cite{chervov2025cayleypy, chervov2025machine, Gukov_2025,swirszcz2025advancing,douglas2025diffusion}.

As LLMs continue to grow more capable, LLM-based agents have also improved, and several recent works have demonstrated their potential for discovering new mathematical constructions. \textit{FunSearch} \cite{romera2024mathematical} searches for programs that generate desired constructions using an evolutionary approach. For problems equipped with a well-defined evaluator, FunSearch employs an off-the-shelf LLM to iteratively evolve low-scoring candidate programs into higher-scoring ones. This is achieved by maintaining a large, diverse pool of programs and repeatedly prompting the LLM to improve upon earlier candidates. Using this method, FunSearch discovered new constructions of large cap sets that surpass the best previously known results in extremal combinatorics. Building on this line of work, \textit{AlphaEvolve} \cite{novikov2025alphaevolve} uses a stronger LLM and extends the evolutionary process to entire code files rather than single functions, while also optimizing multiple metrics simultaneously. AlphaEvolve has produced improved constructions for several problems, including the \textit{Minimum Overlap Problem} and the \textit{Kissing Numbers problem} in 11 dimensions. Open-source implementations inspired by AlphaEvolve include \textit{OpenEvolve} \cite{openevolve}, \textit{ShinkaEvolve} \cite{lange2025shinkaevolve}, and \textit{DeepEvolve} \cite{liu2025scientific}. AlphaEvolve-style agents are particularly suitable for finding new constructions in mathematical problems that can be approached by writing code and evaluated with a well-defined scoring function.

\subsection{Problem-Specific Formal Reasoning}

\textit{AlphaGeometry} \cite{trinh2024solving} represents a neuro-symbolic approach to solving Olympiad-level Euclidean geometry problems. It integrates a symbolic geometric reasoning engine with a language model designed to suggest auxiliary constructions. The symbolic component, based on a deductive database (DD) \cite{chou2000deductive} and algebraic rules (AR), exhaustively derives the deduction closure of a given set of premises. Since purely symbolic deduction cannot introduce new geometric objects, a capability often required for complex proofs, the system utilizes a language model to propose useful auxiliary points. To circumvent the scarcity of human proof data, the authors generated a large-scale dataset of synthetic proof graphs by alternating symbolic deduction with random point insertions, subsequently extracting minimal proofs via traceback. During inference, the system operates in a loop: the symbolic engine expands the deduction closure, the model proposes high-probability auxiliary points, and the cycle repeats until the target conclusion is reached. This architecture allows AlphaGeometry to significantly outperform heuristic-based systems, reaching performance comparable to an IMO silver medalist.

Its successor, \textit{AlphaGeometry2} \cite{chervonyi2025gold}, further advances this paradigm through enhancements in both expressivity and efficiency. The formal geometry language was expanded to support locus descriptions, linear geometric relations, and non-constructive statements, while the underlying symbolic engine was re-engineered for greater speed and robustness. These improvements enabled the generation of a larger, more diverse synthetic training set for the language model. Furthermore, AlphaGeometry2 introduces a novel search algorithm, the \textit{Shared Knowledge Ensemble of Search Trees (SKEST)}, which executes multiple search trees in parallel. By allowing these trees to exchange discovered information, SKEST significantly improves the exploration of the auxiliary construction space. Consequently, AlphaGeometry2 achieves gold-medalist performance on IMO-level geometry problems. Beyond neural approaches for auxiliary point generation, recent work such as \textit{HAGeo} \cite{duan2025gold} proposes a purely heuristic-based strategy that introduces auxiliary constructions with favorable geometric properties, including intersections of lines and circles, midpoints, and point reflections, and likewise achieves gold-medal-level performance. Additional work on Euclidean geometry problem solving can be found in \cite{zhang2023formalgeo,he2024fgeo,zou2024fgeo,zhang2024fgeo,zhang2024proposing}.

\subsection{Discussion}

The approaches discussed in this section, identifying patterns to guide intuition, constructing counterexamples, and performing formal reasoning in closed systems, each offer distinct advantages while presenting unique challenges.

The \textit{AI-guided intuition} paradigm is powerful because it enables mathematicians to uncover patterns in high-dimensional data that are otherwise difficult or time-consuming to detect manually, effectively reducing the search space in exploratory research. However, this approach is not universally applicable. It relies on careful problem selection, as the target question must admit the generation of sufficiently large and representative datasets. Furthermore, successful implementation demands a high barrier of dual expertise: beyond standard machine learning considerations such as architecture design and loss engineering, deep mathematical insight is indispensable for interpreting model outputs and transforming empirical correlations into rigorous mathematical theory. Ultimately, since the final proof and verification are typically carried out by human mathematicians, the degree of automation in this workflow remains limited.

Conversely, employing machine learning to construct examples and counterexamples can significantly accelerate the formulation and testing of conjectures by discovering objects that defy human intuition. Yet, this direction also faces technical hurdles, particularly regarding out-of-distribution (OOD) generalization. For instance, the backward generation method in \cite{alfarano2024global}, which constructs problems from sampled solutions, may yield training sets with specific distributions. This can present challenges for generalizing to typical problem instances and often requires carefully designed procedures, such as the diversity promoting mechanism for Lyapunov functions proposed by the authors, to ensure robust performance. Additionally, when RL is employed, mapping a mathematical problem onto an MDP is non-trivial. Defining appropriate state representations, action spaces, and reward functions can be intricate \cite{wagner2021constructions}, and the learning process may be further complicated by sparse rewards and long planning horizons \cite{shehper2025what}.

Systems like FunSearch and AlphaEvolve represent a powerful shift toward automated mathematical discovery through program evolution. These agents excel at construction-type problems, such as finding extremal configurations in combinatorics or improving algorithmic bounds, where the search space can be navigated by iteratively refining code and verified through a well-defined scoring function. However, this paradigm remains inherently constrained by its reliance on such evaluators. While highly effective at identifying specific mathematical objects (e.g., cap sets or high-dimensional sphere packings), these systems do not explicitly engage with the formulation of conceptual frameworks or structural principles that underlie general theories or connect disparate mathematical domains. Their success is therefore currently concentrated in verifiable construction, leaving the discovery of transferable abstractions and unifying insights as an open frontier.

Finally, problem-specific formal reasoning systems, such as AlphaGeometry, demonstrate that combining symbolic engines with neural language models can achieve expert-level performance in structured domains. However, the success of these systems typically relies on the availability of domain-specific symbolic engines (e.g., deductive databases for geometry \cite{chou2000deductive}) and the capability to generate large-scale synthetic data. Consequently, these architectures are often tightly tailored to their specific problem scope and may not transfer to other areas of mathematics without substantial modification.

\section{General-Purpose Modeling}\label{sec:grm}

General-purpose modeling marks a shift from specialized algorithms designed for isolated problems to adaptable systems capable of handling broad areas of mathematics. Unlike problem-specific modeling, which requires bespoke features and architectures for each new task, general-purpose approaches leverage foundation models trained on vast corpora to learn universal representations of mathematical knowledge. These models aim to support a wide spectrum of activities, from solving diverse problem sets to retrieving theorems and orchestrating complex workflows, without requiring substantial modification for each new domain.

We categorize recent progress in general-purpose modeling into three complementary directions: natural language reasoning models that leverage the intuitive power of language; formal reasoning models that ensure rigor through interaction with proof assistants; and mathematical information retrieval systems that ground reasoning in established knowledge. This section begins by analyzing the capabilities and inherent limitations of foundation models (especially LLMs), establishing the context for our detailed review of these three key areas.

\subsection{Foundation Models and LLMs}

Compared with traditional machine learning models that are trained for a single, narrowly defined task, LLMs serve as foundation models: a single architecture trained on a broad collection of data and tasks in a unified manner. Mathematically, this distinction represents a paradigm shift from function approximation to operator approximation, which is a process intimately related to \textit{meta-learning}. A traditional model is typically designed to approximate a specific, fixed mapping $f: \mathcal{X} \to \mathcal{Y}$ between Euclidean spaces, such as classifying objects in an image, by interpolating discrete data points sampled from that specific manifold.

In contrast, foundation models perform \textit{task interpolation} rather than simple data interpolation. They are trained over a vast distribution of tasks $\{T_i\}$, where each task effectively defines a distinct mapping $f_i$. By learning to generalize across this distribution, the foundation model does not merely approximate a single function; instead, it approximates a global operator (or functional) $\Psi$ acting on a function space. Given a task specification or context as input, $\Psi$ instantiates the specific mapping $f_i$ required for that context. In essence, while traditional machine learning approximates functions, foundation models approximate the operators that generate and manipulate those functions. 

A critical factor in their success is the capability to process diverse data types; tokenization converts heterogeneous inputs into a common sequence representation, and the next-token prediction objective provides a single learning rule that applies uniformly across all tasks the model encounters. Attention-based architectures are pivotal in this regime. Beyond effectively scaling with model size and data volume, the attention mechanism serves as the key engine for reasoning by enforcing long-context coherence during training. This allows the model to capture and maintain complex dependencies over extended sequences—a prerequisite for logical deduction. Through exposure to diverse domains and supervision signals, the model is forced to compress vast amounts of heterogeneous data into shared internal representations and to discover a common low-dimensional structure across tasks and languages. A natural hypothesis is that a key component of this low-dimensional structure corresponds to general-purpose reasoning abilities that can be expressed in different languages and domains.

Mathematics fits naturally into this framework. Mathematical work is governed by strict logical rules, and many mathematical tasks can be phrased as producing the next meaningful step in a calculation, derivation, or proof—exactly the kind of stepwise structure that the next-token prediction objective is designed to model. Consequently, when an LLM is trained as a foundation model on sufficiently rich mathematical and scientific corpora, the same mechanisms that support cross-domain generalization and long-context coherence can be harnessed to learn and use a wide range of mathematical reasoning abilities.

However, a fundamental gap remains between mastering standardized examinations and engaging in research-level mathematics. While current models excel at solving well-defined undergraduate problems, research demands open-ended exploration, absolute logical rigor, and the ability to navigate the ``long tail" of specialized domain knowledge—challenges where stochastic text generation often falls short. To elevate AI from a competent solver to a reliable research partner, general-purpose modeling must therefore advance beyond simple next-token prediction, integrating formal verification, semantic retrieval, and agentic workflows to bridge the divide between plausible text and rigorous truth.

\subsection{Natural Language Reasoning}\label{subsec:nlr}

Current approaches to natural language mathematical reasoning generally fall into two categories: \textit{math-specific LLMs} and \textit{general-purpose reasoning models}.

Math-specific LLMs are typically adapted from general foundation models through specialized pre-training and post-training pipelines. During pre-training, filtering pipelines \cite{shao2024deepseekmath,yang2024qwen2} extract high-quality mathematical content from web corpora (e.g., Common Crawl), textbooks, and research papers to maximize domain relevance. Post-training refines these models using supervised fine-tuning (SFT) and RL. The data for SFT is often structured as Chain-of-Thought (CoT) \cite{wei2022chain} pairs, consisting of problems and step-by-step solutions, or Tool-Integrated Reasoning (TIR) \cite{goutora} examples that incorporate external code execution. A prominent example is \textit{NuminaMath} \cite{li2024numinamath}, which won the first AIMO Progress Prize by fine-tuning on high-quality CoT and TIR datasets. While models in this category \cite{azerbayev2024llemma,shao2024deepseekmath,yang2024qwen2,ying2024internlm} excel at elementary and competition-level benchmarks (e.g., GSM8K \cite{cobbe2021training}, MATH \cite{hendrycksmath2021}, AIME), their capacity for advanced mathematics remains less explored.

Concurrently, general-purpose LLMs have achieved significant mathematical milestones driven by scale and novel inference strategies. Early iterations like GPT-3 struggled with basic arithmetic, whereas GPT-4 \cite{achiam2023gpt} achieved 92.0\% on GSM8K. A paradigm shift occurred with the introduction of \textit{test-time scaling}, where models dedicate increased computation to reasoning during inference. OpenAI's o1 model demonstrated strong performance on the AIME, and subsequent reasoning models \cite{guo2025deepseek,qwq32b,yang2025qwen3,comanici2025gemini} have further validated this approach. By 2025, enhanced reasoning models, such as Google's Gemini Deep Think, reached gold-medal performance at the International Mathematical Olympiad (IMO) using purely natural-language reasoning, marking a definitive maturity for the technology in the domain of high-school olympiad mathematics.

However, transitioning from Olympiad problems to higher mathematics presents a steeper challenge. Prior studies indicate that while GPT-4 can assist with undergraduate topics, it requires critical human oversight \cite{collins2024evaluating} and often fails at the graduate level \cite{NEURIPS2023_58168e8a}. Recent benchmarks quantify this gap: \cite{jiang2025fate} reported that DeepSeek-R1 achieves 71.0\% proof accuracy on graduate-level algebra (FATE-H) but drops to 33.0\% on PhD qualifying exams (FATE-X). Similarly, on the \textit{FrontierMath} Benchmark \cite{glazer2024frontiermath}, which consists of unpublished research-level problems, Gemini 3 Pro scores 18.75\% on the research-level split (Tier 4)\footnote{\href{https://epoch.ai/frontiermath}{https://epoch.ai/frontiermath}}, indicating that robust research-level reasoning remains an open problem.

To empirically assess the current capabilities of state-of-the-art models, we constructed a dataset of 100 problems drawn from the final exams of 11 undergraduate mathematics subjects at Peking University (PKU). We evaluated five models: GPT-4, o1, o3-mini, DeepSeek-R1, and Gemini 2.5 Pro. Sample problems and model responses are provided in Appendix \ref{sec:exam}. Human experts graded the outputs on a 0--5 scale (criteria in Table \ref{tab:score_crtr}); the normalized results are presented in Figure \ref{fig:nl_reason} (left and middle). While GPT-4 scores below 60, reasoning-enhanced models (OpenAI o-series, DeepSeek-R1, Gemini 2.5 Pro) show substantial improvement, with several exceeding a score of 90. 

Furthermore, we evaluated o3-mini on 58 problems from PKU PhD Qualifying Exams in Analysis, Probability, Algebra, and Geometry \& Topology. As shown in Figure \ref{fig:nl_reason} (right), o3-mini achieves an average score of 84.4. A closer examination of the subject-wise performance reveals a notable divergence: the model exhibits its strongest proficiency in Algebra while scoring lowest in Geometry \& Topology. Under the assumption that these exams pose a comparable level of challenge for human students, this suggests that current AI systems are comparatively more adept at handling abstract algebraic structures than tasks requiring geometric intuition. Although these results must be interpreted with caution due to potential data contamination and the specific nature of exam problems compared to open research, they provide strong evidence that top-tier models can now handle a significant portion of graduate-level mathematics.

Building on frontier models, LLM-based agents equipped with appropriate harness engineering can further extend performance toward research-level mathematics. One representative example is \textit{Aletheia} \cite{feng2026towards}, an agent built on an advanced version of Gemini Deep Think that coordinates separate generator, verifier, and reviser components. It has been applied to several research-level tasks, including the computation of eigenweights for the Arithmetic Hirzebruch Proportionality Principle \cite{feng2026eigenweights}, questions about the simplicity of the Hodge bundle \cite{patel2026simplicity}, and bounds for independence sets \cite{lee2026lower}. In addition, \textit{Aletheia} solved 6 of the 10 problems in the FirstProof benchmark \cite{feng2026aletheia}, introduced in \cite{abouzaid2026first}, a collection of research problems that had been solved by human mathematicians but were not publicly available at the time of evaluation. Another example is \textit{Rethlas} \cite{ju2026automated}, an agent designed to mimic the workflow of human mathematicians by combining exploratory reasoning with tools such as theorem retrieval. This framework automatically resolved an open problem in commutative algebra proposed by D. D. Anderson in 2014 \cite{cahen2014open}.

Synthesizing these findings, we observe a clear trajectory: the mathematical reasoning abilities of LLMs and LLM-based agents have progressed from mastering elementary calculations and high-school competitions to achieving competence in undergraduate curriculum, and are now beginning to penetrate the domain of graduate and research-level mathematics \cite{bryan2026motivic,jang2025point}.


\begin{figure}[tb]
\centering
\includegraphics[width=0.95\linewidth]{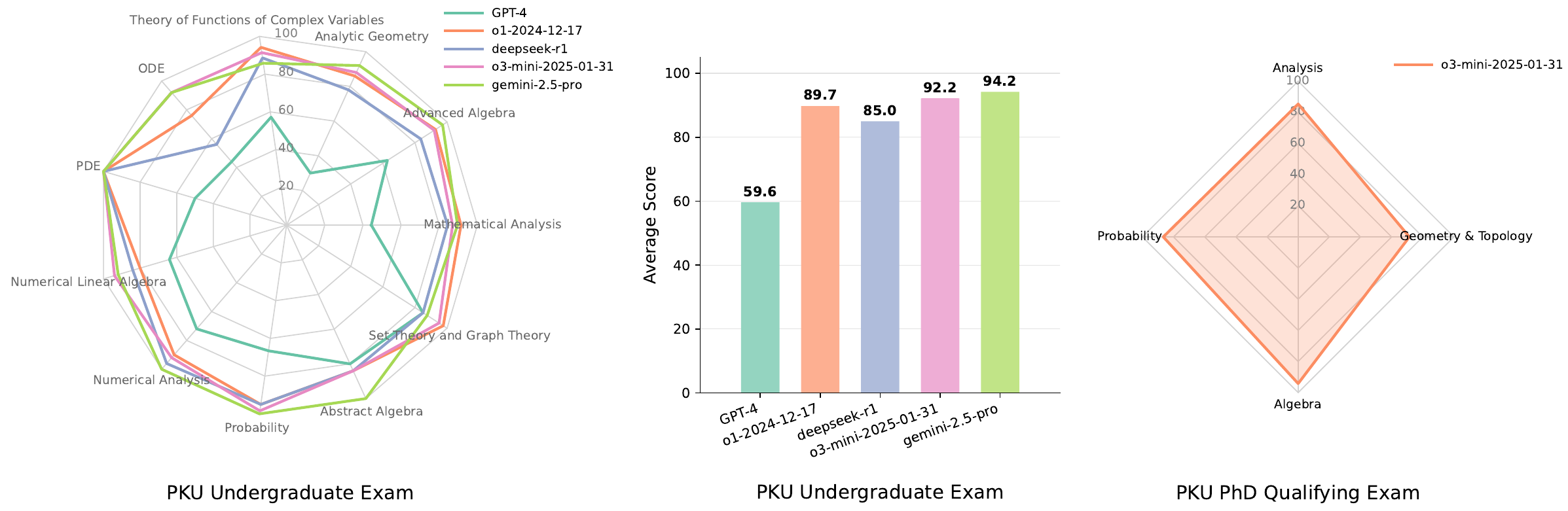}
\caption{Performance of five LLMs across different Peking University (PKU) exams. Left: Scores on 11 undergraduate subjects. Middle: Average scores of the five LLMs across these subjects. Right: o3-mini’s performance on four PhD qualifying exam fields (average score: 84.4).}
\label{fig:nl_reason}
\end{figure}

\subsection{Formal Reasoning}\label{subsec:fr}

Although top-tier LLMs and LLM-based agents can now solve certain graduate-level and even certain research-level problems, evaluating their capabilities remains a significant bottleneck involving substantial human effort. As the complexity of mathematics increases, assessment depends heavily on domain experts; yet, because natural language is inherently imprecise, even experienced mathematicians can misjudge arguments. A notable historical example occurred in 1994, when a result published in the \textit{Annals of Mathematics} claimed that the Busemann–Petty problem has a negative solution in dimensions at least four \cite{zhang1994intersection}. This conclusion was later shown to be incorrect \cite{koldobsky1998intersection,koldobsky1998advances}, and a 1999 paper established that the problem in fact possesses a positive solution in dimension four \cite{zhang1999positive}. Similarly, a paper \cite{schumacher2004quasi} published in the \textit{Annals of Mathematics} in 2004 was retracted in 2023 \cite{schumacher2023retraction} after a counterexample to its main result, discovered in 2006, was identified \cite{kollar2006non}. These cases illustrate that errors can persist even within the rigorous peer-review processes of top journals. To enable fast and reliable verification of mathematical reasoning, research must therefore move toward a more mechanical and unambiguous framework. Formal systems provide exactly such a foundation. In this section, we discuss the benefits of formal systems for mathematical research and their utility in enhancing the reasoning abilities of LLMs, followed by a review of recent progress in automated theorem proving and autoformalization.

\subsubsection{Formal Systems}

Formal systems provide a precise symbolic language paired with rigorously defined mechanisms for constructing and verifying proofs. Various systems exist, distinguished by their underlying logical foundations: HOL systems (e.g., HOL Light, Isabelle/HOL) utilize simple type theory; Coq (now known as Rocq) and Lean employ dependent type theory; Metamath operates on first-order logic with explicitly specified axioms; and Mizar is grounded in Tarski–Grothendieck set theory. Once a mathematical argument is translated into the formal language of an interactive theorem prover (ITP), it can be verified with absolute rigor. Had the erroneous 1994 result regarding the Busemann--Petty problem been formalized in such a system, the underlying logical flaws would likely have been detected immediately, preventing the publication of an incorrect claim.

Beyond their intrinsic value to mathematical correctness, formal systems offer a critical advantage for AI development: they provide reliable, verifiable supervision. Unlike elementary mathematics, where answers are often numeric and easily checked, advanced proof-oriented problems lack simple verifiers, making it difficult to generate reliable training signals. Interactive theorem provers bridge this gap by providing precise feedback on the validity of every logical step. This capability supplies high-quality training signals for RL, thereby enabling models to develop stronger reasoning capabilities in rigorous settings.

Among interactive theorem provers, Lean \cite{DBLP:conf/cade/MouraKADR15,DBLP:conf/cade/Moura021} has cultivated a particularly robust ecosystem. Its unified mathematical library, \textit{mathlib4}, has expanded rapidly through large-scale community collaboration and, as of December 2025, contains over 250,000 theorems and 120,000 definitions. A landmark achievement in this domain is the \textit{Liquid Tensor Experiment}, led by Johan Commelin, which formalized a central theorem of Peter Scholze on liquid vector spaces. Scholze, who initially harbored doubts about the proof's correctness, remarked that the theorem might be his ``most important to date''\footnote{\href{https://xenaproject.wordpress.com/2020/12/05/liquid-tensor-experiment/}{https://xenaproject.wordpress.com/2020/12/05/liquid-tensor-experiment/}}. The project, which spanned approximately 18 months, not only verified the result but also led to a simplification of the original Clausen--Scholze proof, helping Scholze deepen his understanding of the argument's structure\footnote{\href{https://xenaproject.wordpress.com/2021/06/05/half-a-year-of-the-liquid-tensor-experiment-amazing-developments/}{Xena Project blog post, June 2021}}. Furthermore, the Liquid Tensor Experiment catalyzed the development of algebraic infrastructure within mathlib4: it drove the early formalization of homological algebra and category theory and attracted a wave of algebraists to the community. Other notable milestones include the formalization of the Polynomial Freiman--Ruzsa (PFR) conjecture\footnote{\href{https://teorth.github.io/pfr//}{https://teorth.github.io/pfr//}}, the sphere eversion theorem\footnote{\href{https://leanprover-community.github.io/sphere-eversion/}{https://leanprover-community.github.io/sphere-eversion/}}, ongoing work on formalizing the solution to the sphere packing problem in dimension 8\footnote{\href{https://thefundamentaltheor3m.github.io/Sphere-Packing-Lean/}{https://thefundamentaltheor3m.github.io/Sphere-Packing-Lean/}}, and the long-term effort to formalize Fermat’s Last Theorem\footnote{\href{https://leanprover-community.github.io/blog/posts/FLT-announcement/}{https://leanprover-community.github.io/blog/posts/FLT-announcement/}} lead by Kevin Buzzard. Expanding into applied mathematics, recent work has also established the formal foundations for numerical optimization in Lean4, specifically verifying the convergence of first-order algorithms \cite{li2025formalization}.

However, these projects remain labor-intensive, requiring experts to manually translate definitions and proofs into code. This high cost motivates the development of \textbf{autoformalization} tools and \textbf{automated theorem provers} to accelerate the \textit{digitalization} of mathematical knowledge—the conversion of standard informal mathematics into rigorous formal systems such as Lean.

\subsubsection{Autoformalization}

Autoformalization is the task of translating mathematical statements and proofs from natural language into formal code in an autonomous manner (e.g. by a language model). Early efforts in this domain employed sequence-to-sequence models trained on aligned data. For instance, \cite{wang2018first} constructed datasets by informalizing Mizar statements to train translation models. Addressing the scarcity of aligned pairs, \cite{wang2020exploration} subsequently explored unsupervised approaches based on cycle-consistency losses, where models learned to reconstruct statements by translating them between informal and formal domains without explicit supervision.

The advent of LLMs fundamentally shifted this paradigm. Research demonstrated that off-the-shelf LLMs could generate reasonable formalizations via few-shot prompting \cite{wu2022autoformalization,gadgil2022towards,agrawal2022towards,azerbayev2023proofnet}. Crucially, \cite{wu2022autoformalization} observed an asymmetry: translating formal code to natural language (informalization) is significantly easier for models than the reverse. This insight catalyzed the creation of large-scale synthetic datasets, where LLMs are used to informalize massive formal libraries (such as mathlib4) to generate high-quality aligned pairs for training specialized autoformalizers \cite{jiang2023multilingual,lu2024process,gao2025herald,liu2025rethinking}.

Recent work focuses on improving the quality and grounding of these systems. \textit{Herald} \cite{gao2025herald} introduces a hierarchical informalization strategy that respects the dependency graph of the library mathlib. By translating declarations in topological order, it ensures that the natural language descriptions of prerequisite concepts are available when translating dependent theorems. Herald further augments its data via tactic-based state synthesis and achieves over 96\% statement autoformalization accuracy on the miniF2F validation set. To enhance grounding, \textit{RAutoformalizer} \cite{liu2025rethinking} employs retrieval mechanisms to anchor generated code in existing formal declarations. Addressing the challenge of missing concepts, common in research-level mathematics, \cite{wang2025aria} proposed \textit{Aria}, an LLM-based agentic system. More generally, an LLM-based agent refers to a system that operates through an explicit interaction loop with its environment: it maintains intermediate states, reasons over observations, performs multi-step planning, and selects actions accordingly. These actions may include invoking external tools such as semantic retrieval, symbolic reasoning modules, or code synthesis components, with feedback from the environment used to guide subsequent decisions. Such agentic designs enable the decomposition of complex tasks into structured subtasks and support iterative refinement beyond single-pass generation \cite{wang2024survey}. Within this framework, Aria decomposes informal statements into a dependency graph of concepts; if a concept is absent from the library mathlib, the agent recursively defines it from the bottom up using semantic search and synthesis, effectively handling the ``long tail'' of mathematical vocabulary.

\paragraph{Evaluation and Verification}
Evaluating statement autoformalization is non-trivial. Once a statement is correctly autoformalized, evaluating an autoformalized proof becomes straightforward due to the formal system, which makes statement autoformalization the primary difficulty from an evaluation perspective. While human expert review is the gold standard, it is unscalable. The challenge, therefore, lies in developing automated metrics for correctness.
\begin{itemize}
    \item \textbf{With Ground Truth:} When a reference formal statement exists, correctness can be assessed via logical equivalence rather than mere string matching. For example, \textit{BEq} \cite{liu2025rethinking} utilizes a neural theorem prover to check if the generated statement and the ground truth can be derived from one another. Similar equivalence-checking approaches are explored in \cite{liu2025assess,poiroux2025reliable}.

    \item \textbf{Without Ground Truth (Semantic Verification):} In the absence of a reference, one must verify \textit{semantic correctness}—whether the formal code faithfully captures the intent of the informal statement. A naive approach involves ``back-translation'', where an LLM translates the code back to English for comparison \cite{ying2024lean,gao2025herald}. However, this is prone to errors as LLMs may miss subtle logical discrepancies. To mitigate this, \cite{xuejun2025mathesis} proposes \textit{Mathesis}, a fine-grained evaluation framework. Mathesis decomposes statements into assumptions and conclusions, evaluates the alignment of each component separately, and aggregates these scores using a fuzzy integral to rigorously reject inconsistencies. To further assist verification, Aria \cite{wang2025aria} enriches the context by retrieving detailed metadata (types, values, informal descriptions) for every term in the formal statement, enabling more accurate semantic judgments.
\end{itemize}
Reliable verifiers are not only essential for evaluation but also serve as critical reward models for RL, creating a feedback loop that iteratively improves autoformalization performance \cite{xuejun2025mathesis,lu2024process,huang2025formarl}.

\textit{Note:} This section has focused on \textit{statement} autoformalization. The autoformalization of \textit{proofs}, which requires translating logical reasoning steps rather than just definitions, is inextricably linked to automated theorem proving. We therefore discuss proof autoformalization in the context of proof generation in the following section.

\subsubsection{Automated Theorem Proving}

\begin{figure}[tb]
\centering
\includegraphics[width=0.95\linewidth]{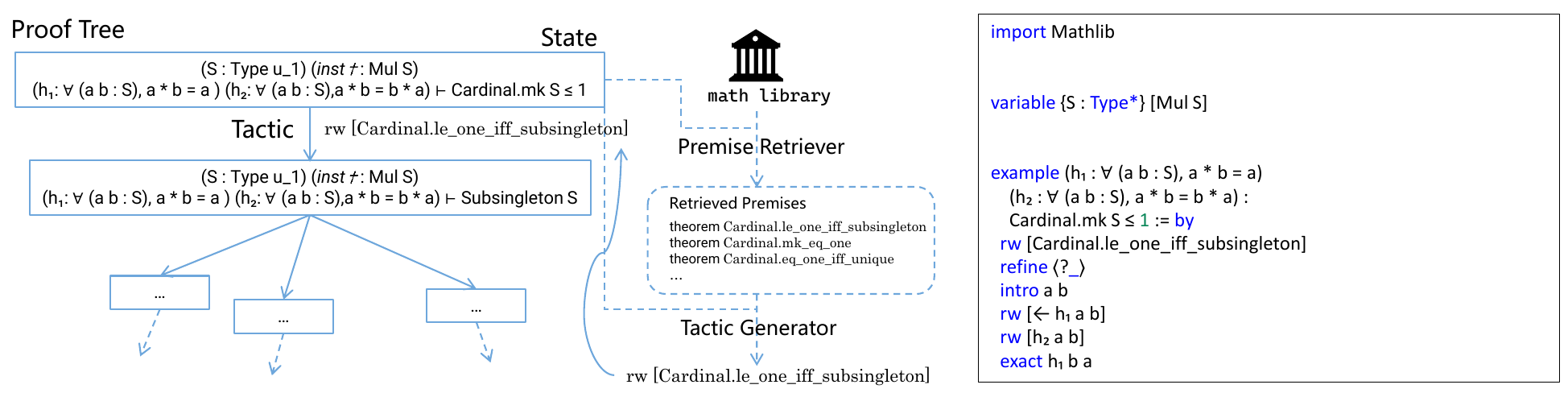}
\caption{Example of a proof step generation method and the complete proof it produces. Left: An illustration of a proof tree, where each node represents a proof state and each edge corresponds to a tactic. Applying an appropriate tactic transforms the current proof state into the next one. At each step, the proof step generation method retrieves relevant premises and proposes several candidate tactics to try. Right: The complete proof ultimately obtained by this process.}
\label{fig:proof-tree}
\end{figure}

Automated theorem proving in formal systems aims to generate valid proofs for formal statements. Deep learning--based approaches to this task fall into two broad categories: \textit{single-model approaches} and \textit{agentic approaches}. Single-model approaches can be further divided into \textit{proof step generation} and \textit{whole-proof generation} methods.

\paragraph{Proof Step Generation}
Proof step generation methods formulate theorem proving as a tree search problem. In this framework, each node in the search tree corresponds to a proof state, and each action corresponds to the application of a tactic that transitions the prover to a new proof state. A proof is successfully constructed once a path is found to a state with no remaining goals.  Figure \ref{fig:proof-tree} illustrates an example of a proof tree produced by such a method, alongside the final proof.

The strengths of proof step generation lie in \textit{reusability} and \textit{exploration}. During search, proof states are reusable: if a newly encountered state coincides with one previously explored, they can be merged. Furthermore, the system exhibits strong exploration capabilities by attempting multiple tactics at each step. However, these methods often suffer from slow inference speeds due to the computational cost of tree search, difficulty in training stability, and a heavy reliance on efficient interactive tools for both training and inference.

One of the earliest neural approaches in this domain is \textit{Holophrasm} \cite{whalen2016holophrasm}, which employs Monte Carlo Tree Search (MCTS) for exploration. Holophrasm integrates three neural components: a \textit{relevance network} for retrieving useful theorems, a \textit{generative network} for proposing variable substitutions, and a \textit{value network} for estimating provability. Subsequent work largely treated tactic prediction as a classification problem, with representative examples including \textit{GamePad} \cite{huang2018gamepad}, \textit{DeepHOL} \cite{bansal2019holist}, and graph-based approaches \cite{paliwal2020graph}. Moving beyond pure classification, \textit{GPT-f} \cite{polu2020generative} trains a Transformer to generate proof steps via a conditional language modeling objective, constructing proofs using best-first search. Similarly, \cite{lample2022hypertree} introduces a hypertree-based search combined with an online training strategy, where policy and critic models are periodically updated on data collected from repeated proof searches.

A major challenge in this domain is the scarcity of large-scale formal data. To mitigate this bottleneck, researchers have pursued two complementary directions: improving model performance under limited formal data, and synthesizing additional formal data. For the former, \textit{ABEL} \cite{gloeckle2024abel} proposes an online reinforcement learning framework with prioritized sampling to encourage exploration; notably, its RL stage is trained on only 244 formal statements yet achieves performance comparable to the state of the art at the time. For the latter, \textit{REAL-Prover} \cite{shen2025real} proposes an integrated pipeline comprising a statement autoformalizer to translate informal statements, a retrieval-augmented prover that generates tactics conditioned on relevant premises, and an \textit{expert iteration} paradigm. In this loop, the model trains on generated state-tactic pairs, performs proof search, and iteratively collects new training data from successful searches. Building on the retrieval-augmented architecture of REAL-Prover, the authors introduced the \textit{reap} \footnote{\href{https://github.com/frenzymath/reap}{https://github.com/frenzymath/reap}} tactic as a concrete instantiation of retrieval-guided proof automation. The reap tactic is currently in a beta-testing phase and maintained as a candidate feature branch for mathlib4.

A notable milestone is \textit{AlphaProof} \cite{hubert2025olympiad}.  AlphaProof trains a 3B-parameter proof network that jointly outputs tactics and value estimates. Its training pipeline involves pretraining on 300B tokens, supervised fine-tuning on 300k state-tactic pairs, and reinforcement learning on 80 million autoformalized statements. These formal statements are derived from approximately one million informal problems, where the autoformalization model is trained on triplets of (informal statement, formalization chain-of-thought, formal statement), with multiple distinct translations generated per problem. For particularly challenging tasks, AlphaProof applies \textit{test-time reinforcement learning} to adapt to the problem structure by constructing and training on a specialized curriculum. As a result, it achieves performance comparable to an IMO silver medalist. Other notable approaches include \cite{jiang2022thor, polu2023formal, wang-etal-2023-dt, yang2023leandojo, xin2025bfs, li2024hunyuanprover,xin2025scaling}.

\paragraph{Whole-Proof Generation}
In contrast, whole-proof generation methods aim to produce an entire formal proof, possibly augmented with inline comments, in a single forward pass. Their primary advantages are high inference speed and independence from interactive tools during generation. However, their exploration capability is limited compared to step-wise search; they often rely on behavior cloning and are more susceptible to error accumulation since they lack access to intermediate proof states.

This paradigm relies heavily on data quality and quantity. Since there is no principled a priori method for determining data quality, evaluation is often indirect via model performance. To address data quantity, \cite{xin2024deepseek} proposed an integrated pipeline involving automatic statement formalization, filtering (to remove trivial or false statements), statement proving, and iterative training on the resulting verified pairs. Building on this, \textit{DeepSeek-Prover-V1.5} \cite{xin2025deepseekproverv} improved performance by constructing richer datasets that include informal proofs written before formal code and inline informal comments, and by applying \textit{Reinforcement Learning from Verifier Feedback (RLVF)}. Other works adopting this paradigm include \cite{first2023baldur, azerbayev2024llemma, ying2024internlm, dong2025stp, zhang2025leanabell,wang2025kimina,lin2025goedel}.

\paragraph{Agentic Approaches}
Agentic approaches \cite{jiang2023draft,wang2024legoprover,varambally2025hilbert,Chen_et_al_2025_Seed-Prover1.5,jiang2025fate,gloeckle2025reinforcement,achim2025aristotle} represent a paradigm shift from single-model systems to modular workflows. These methods decompose theorem proving into coordinated sub-tasks, such as retrieval, decomposition, and verification, orchestrated through structured workflows that integrate language models with external tools.  The effectiveness of these approaches relies on three core components: robust retrieval systems, the reasoning capabilities of LLMs, and workflows designed to mimic the mathematical research process.

\textit{Draft, Sketch, and Prove (DSP)} \cite{jiang2023draft} is a prototype of this paradigm. It generates an informal proof, translates it into a formal sketch with open subgoals, and closes them using a lightweight prover. \textit{LEGO-Prover} \cite{wang2024legoprover} extends this by maintaining a persistent lemma pool. Uniquely, it evolves verified lemmas into new ones through strategies such as \textit{dimension extension}, \textit{key concept identification}, \textit{parameterization}, and \textit{complexity enhancement}. \textit{Hilbert} \cite{varambally2025hilbert} transforms informal proofs into formal sketches via recursive subgoal decomposition, guided by theorem retrieval. \textit{Seed-Prover-1.5} \cite{Chen_et_al_2025_Seed-Prover1.5} similarly employs a specialized sketch model and a dedicated prover model, achieving 80\% accuracy on the graduate-level benchmark FATE-H and 33\% accuracy on the PhD-level benchmark FATE-X \cite{jiang2025fate}. Notably, for agentic approaches such as Seed-Prover, the widely used miniF2F benchmark \cite{zheng2021minif2f} has largely been saturated and can be regarded as effectively solved. Addressing the granularity gap between informal reasoning and formal code, \cite{wang2025translating} proposes a two-stage Chain of States (CoS) framework. By extracting a sequence of intermediate formal states aligned with the logical flow of the informal argument before generating the specific transition tactics, this approach significantly reduces the complexity of tactic generation under constrained computational budgets.

Advanced agents like \textit{Aristotle} \cite{achim2025aristotle} interleave informal reasoning with formal verification. Aristotle drafts proofs as sequences of lemmas, formalizes them, and attempts verification, refining its outputs iteratively based on feedback. Together with a geometry solver, it achieved performance at the level of an IMO gold medal. \textit{Numina-Lean-Agent} \cite{liu2026numina} equips the coding agent Claude Code with Lean-LSP-MCP \cite{lean-lsp-mcp} for interacting with Lean, a semantic search engine for theorem retrieval, an informal prover \cite{huang2025winning}, and an external LLM for additional reasoning support. It successfully solved all problems in the 2025 Putnam Competition, an achievement first reported by \textit{AxiomProver} \cite{axiommath_fromSeeingWhy_2025}. Numina-Lean-Agent also formalized a research paper on Brascamp–Lieb inequalities \cite{benard2025effective} by interacting with mathematicians through iterative refinement of a natural language blueprint, represented as a dependency graph for both statements and proofs.
Finally, the \textit{Gauss} agent \cite{mathinc_gauss} demonstrated the power of human-AI collaboration by formalizing the strong Prime Number Theorem within three weeks, leveraging expert scaffolding. These results demonstrate that carefully designed agentic workflows can effectively exploit both intrinsic reasoning capabilities and external tools to achieve substantial gains in automated theorem proving.


The practical impact of formal reasoning agents is perhaps most clearly illustrated by their emerging role in recent work on conjectures posed by Paul Erdős, as discussed by Terence Tao \footnote{\href{https://github.com/teorth/erdosproblems/wiki/AI-contributions-to-Erd\%C5\%91s-problems}
     {https://github.com/teorth/erdosproblems/wiki/AI-contributions-to-Erdős-problems}}. During late 2025 and early 2026, agentic systems such as Aristotle became integral components of a high-leverage verification workflow: frontier language models (e.g., Gemini Deep Think, ChatGPT) were used to generate candidate constructions or heuristic arguments, which were then rigorously formalized and mechanically checked in Lean. This pipeline has already yielded formally verified counterexamples to several Erdős problems (notably Problem \#205) and partial resolutions of more intricate problems (e.g., \#367), while many related cases remain open. Importantly, these examples are best viewed as representative rather than definitive: the process is ongoing, and further results continue to emerge. While formalization dramatically reduces the risk of irreparable hallucinations, it does not eliminate all failure modes—models may still exploit unintended problem specifications or rediscover arguments later found in the literature. Nevertheless, these developments mark a qualitative shift. Formal reasoning agents now function as reliable research copilots, systematically converting heuristic insight into certified mathematical knowledge and mitigating long-standing verification bottlenecks in exploratory mathematics.

\subsection{Mathematical Information Retrieval}\label{subsec:mir}

Mathematical Information Retrieval concerns the task of retrieving mathematical content, including formulas, theorems, and problem solutions, from large collections of mathematical documents. In contrast to standard text retrieval, MIR must explicitly account for the distinctive structure and semantics of mathematical expressions. Mathematical formulas are inherently structured objects whose meaning depends on symbolic composition and relational structure rather than mere lexical overlap. Consequently, an effective MIR system must address challenges such as matching mathematical structure and symbolic patterns, while simultaneously leveraging surrounding textual context to resolve ambiguity and interpret meaning.

Crucially, MIR is not merely a search tool for human users but a foundational component of modern automated theorem proving (ATP) and AI agent systems. In the context of ATP, \textit{premise retrieval}, the task of identifying useful theorems, lemmas, or definitions from a vast library to prove a new theorem, is often the primary bottleneck. As mathematical libraries grow to contain hundreds of thousands of formal statements (e.g., mathlib4), the ability to efficiently retrieve the ``needle in the haystack'' determines whether a prover can solve a problem or times out. For agentic systems, MIR enables access to long-term mathematical memory, allowing agents to ground their reasoning in established knowledge rather than hallucinating unsupported facts. This requires a shift from traditional keyword-based matching to reasoning-based retrieval. A robust MIR model must understand logical implication and mathematical equivalence; for instance, it must recognize that a theorem stating ``a square matrix has a non-zero determinant'' is a critical premise required to answer a query about whether ``the columns of the matrix form a linearly independent set,'' despite the absence of shared keywords between the two statements.

Depending on the granularity of the retrieval target and the nature of the query, MIR encompasses several closely related tasks. Prominent examples include \textit{premise retrieval}, \textit{semantic retrieval}, and \textit{question-answer retrieval}.

\paragraph{Premise Retrieval}

In automated theorem proving, a central subproblem is premise retrieval: given a conjecture and a large library of existing mathematical statements, the system must identify which premises are likely to be useful for constructing a proof. Early approaches relied primarily on handcrafted similarity measures and heuristics\cite{hoder2011sine,meng2009lightweight}. Variants and extensions of these ideas, including tree-based similarity scores, have continued to be explored in more recent work \cite{wang2025treebased}. In parallel, lightweight machine learning methods such as k-nearest neighbors or sparse naive Bayes were also investigated for premise selection \cite{czajka2018hammer}.

Over the past decade, deep learning methods have increasingly been applied to premise retrieval. A representative early neural approach is \textit{DeepMath} \cite{irving2016deepmath}, which encodes conjectures and candidate premises separately and the resulting representations are concatenated and passed through a fully connected network that predicts whether a premise is useful for proving the conjecture. Training is supervised using existing proofs, where premises appearing in proofs are treated as positive examples and hard negative mining is employed to construct informative negative samples. Subsequent work sought to better exploit the internal structure of logical formulas. \textit{FormulaNet} \cite{wang2017premise}, for instance, represents each formula as a graph derived from its syntactic structure, with nodes corresponding to constants, variables, or quantifiers. A graph neural network is then used to compute embeddings, which are combined and fed into a classifier to estimate relevance.

Beyond pairwise scoring models, later work explored graph-level representations over entire libraries of statements. In \cite{ferreira-freitas-2020-premise}, the authors construct a global graph in which nodes correspond to mathematical statements and directed edges encode premise–conclusion relationships extracted from proofs. Premise selection for a new conjecture is formulated as a link prediction problem, and a graph convolutional network is used to score potential edges based on textual and structural features of nodes. In parallel, another line of research adopts embedding-based retrieval methods, treating each mathematical statement as text and encoding it into a single vector using a learned embedding model. Relevance is then assessed via similarity in the embedding space, often followed by a learned re-ranking stage to refine the retrieved candidate set. Training typically relies on contrastive objectives that draw conjectures closer to premises appearing in their proofs while pushing them away from irrelevant statements. Representative examples of this approach include \cite{yang2023leandojo,tao2025learning,shen2025real}.

\paragraph{Semantic Retrieval} 

Semantic retrieval aims to identify mathematically equivalent or closely related statements from a mathematical corpus based on their meaning rather than surface-level similarity. This task is motivated by practical use cases such as theorem search in large mathematical libraries. For example, users of Lean frequently need to locate relevant theorems in mathlib4 when constructing proofs. In this setting, a query may be expressed either in natural language or in formal code, while the retrieval corpus typically consists of formal declarations from mathlib4. To bridge the gap between informal queries and formal corpora, \textit{LeanSearch} \footnote{\href{https://leansearch.net/}{http://leansearch.net/}} constructs an aligned informal–formal corpus derived from mathlib4 and performs retrieval over the combined representation \cite{gao-etal-2024-semantic-search}. This approach enables semantic matching across representation modalities and significantly improves retrieval effectiveness for natural language queries. At present, LeanSearch is officially supported by the Lean community and has been integrated into mathlib4 as a standard client for semantic retrieval. This level of community endorsement and infrastructural integration makes LeanSearch a practical and sustainable component of the mathlib ecosystem, rather than a standalone research prototype. In addition to LeanSearch, several other semantic search tools have been developed for mathlib4, including \textit{Moogle} \footnote{\href{https://www.moogle.ai/}{https://www.moogle.ai/}}, \textit{LeanExplore} \cite{asher2025leanexplore}, \textit{LeanFinder} \cite{lu2025lean}, and \textit{LeanDex} \footnote{\href{https://leandex.projectnumina.ai/}{https://leandex.projectnumina.ai/}}. 

Beyond formal theorem retrieval, natural-language statement retrieval is also important for mathematical research. It can help mathematicians find related theorems, determine whether a result is already known, avoid redundant effort, and trace the historical origins of a theorem. Existing systems for natural-language semantic search over mathematical statements include \textit{TheoremSearch} \cite{alexander2026semantic}, which supports semantic search over theorem statements from sources such as arXiv papers, ProofWiki, the Stacks Project, and related mathematical corpora. In \textit{TheoremSearch}, theorem slogans serve as compact textual representations for embedding. Another example is \textit{Matlas} \cite{ju2026matlas}, a search engine over 435K published papers and 1.9K textbooks. In \textit{Matlas}, mathematical statements are extracted together with their dependency relations. For each document, these relations are used to construct a directed dependency graph; statements are then unfolded in topological order to obtain more self-contained representations, which are used for embedding and retrieval.

Formula retrieval constitutes an important subtask of semantic retrieval, where the query is a mathematical formula or a formula pattern and the objective is to retrieve semantically related formulas from a collection of documents. This setting introduces unique challenges. Formulas that represent the same mathematical concept may differ significantly in surface form due to notational variation or algebraic properties such as commutativity. Conversely, formulas with similar visual appearance may encode different meanings when interpreted in different mathematical contexts. Traditional approaches to formula retrieval are largely based on tree representations that encode the structural organization of mathematical expressions. In these methods, a formula is represented as a tree, and similarity is defined over trees or their substructures. A widely used representation is the \textit{Symbol Layout Tree (SLT)} \cite{DBLP:journals/ijdar/ZanibbiB12}, in which nodes correspond to symbols and edges encode spatial relationships such as superscript, subscript, or adjacency. Another prominent representation is the \textit{Operator Tree (OPT)} \cite{DBLP:conf/ntcir/GaoYWJT16}, where internal nodes represent operators and leaf nodes represent operands. Compared with SLTs, operator trees abstract away visual layout and focus on mathematical operations and their hierarchical relationships. Tree-based retrieval algorithms typically compare formulas by matching subtrees or paths, or by computing tree edit distances. For example, \textit{Approach0} \cite{DBLP:conf/ecir/ZhongZ19,10.1007/978-3-030-45439-5_47} represents formulas as operator trees and uses leaf-to-root paths as basic retrieval units. It performs retrieval in two stages, first identifying candidate formulas whose paths overlap with those of the query, and then re-ranking candidates based on similarity measures derived from the largest common subtrees. 

Beyond traditional symbolic matching, recent work has explored the use of text embedding models for formula retrieval. Early approaches adapt techniques from natural language representation by linearizing structured formula encodings and embedding them in a continuous vector space. For instance, \textit{Tangent-CFT} \cite{10.1145/3341981.3344235} performs a depth-first traversal over SLT and OPT, tokenizes the resulting tuple sequences, and applies text embedding models to obtain formula representations. Concurrent work augments formula representations with surrounding textual context to better capture semantic meaning \cite{10.1145/3511808.3557567,krstovski2018equation}. For example, \textit{MathAMR} \cite{10.1145/3511808.3557567} integrates formulas into their linguistic context by combining Abstract Meaning Representation (AMR) graphs with OPTs, replacing formula nodes in the AMR graph by the root nodes of the corresponding OPTs, and embedding the resulting linearized graphs using Sentence-BERT.

\paragraph{Question-Answer Retrieval}

Question–answer (QA) retrieval concerns the task of retrieving mathematical answers, explanations, or supporting documents in response to a natural language query. Mathematical questions are inherently multimodal, typically combining natural language with symbolic expressions, formulas, or diagrams, and candidate answers exhibit similar structure. As a result, relevance in mathematical QA retrieval is defined by semantic adequacy, namely whether an answer correctly and meaningfully addresses the question, for example by providing a valid solution, proof, or conceptual explanation, rather than by surface-level lexical overlap.

Traditional general-purpose text retrieval methods, such as BM25 \cite{robertson1995okapi,10.1561/1500000019} and TF–IDF, rely on sparse vector representations of queries and documents and assess relevance through weighted vector similarity \cite{SALTON1988513}. While these methods can be directly applied to question–answer retrieval, they often perform poorly in mathematical settings. This limitation stems from their dependence on exact term matching and their inability to capture the semantics of mathematical language or the structural relationships encoded in mathematical formulas. With the rise of deep learning, research shifted toward neural retrieval models based on pretrained transformers. A common approach is to pretrain and fine-tune transformer models on large-scale mathematical corpora to obtain representations better aligned with mathematical syntax and semantics. For example, \textit{MathBERT} \cite{peng2021mathbert} was pretrained on math-heavy corpora containing formulas and introduced objectives such as masked formula substructure prediction to better model mathematical symbols in context. Building on dense retrieval paradigms, \cite{reusch2021tu_dbs} investigated the use of \textit{ColBERT} \cite{khattab2020colbert} on the \textit{ARQMath} benchmarks \cite{10.1007/978-3-030-85251-1_17,10.1007/978-3-030-58219-7_15}, fine-tuning neural retrievers on millions of question–answer pairs with negatives selected via rule-based heuristics. Recognizing the complementary strengths of symbolic and neural methods, several hybrid approaches have also been proposed. \textit{Mabowdor} \cite{10.1145/3539618.3591746}, for example, combines dense passage retrieval with parallel sparse retrieval based on structure-aware mathematical indexing and traditional text-based methods, merging their outputs via a learned weighted scheme. This hybrid strategy achieved strong performance in ARQMath-3 \cite{Mansouri2022OverviewOA}, highlighting the effectiveness of integrating classical mathematical structure with neural semantic representations for QA retrieval.

\section{Challenges and Prospects}\label{sec:cp}

Despite the promising progress in AI for Mathematics, the field faces a fundamental hurdle: current AI systems, particularly foundation models, still lack the depth of reasoning required for research-level mathematics. Bridging this gap necessitates a shift from passive assistance to active learning within a rigorous ``logic environment.'' This requires accelerating the formalization, or digitalization, of mathematics to provide automated, verifiable feedback that can iteratively strengthen AI reasoning. Furthermore, advancing these capabilities demands the infusion of professional mathematical knowledge, ranging from high-quality data curation to the design of specialized agentic workflows, into the model development process. Ultimately, the goal is to integrate AI seamlessly into the daily practice of mathematicians, a vision that can only be realized through continuous, close collaboration between AI researchers, engineers, and the mathematical community. We summarize these key challenges and future directions below:

\begin{enumerate}
    \item \textbf{Domain Expertise and Feature Engineering:} In problem-specific modeling, the design of input features often necessitates deep domain expertise. Human intuition remains indispensable, both for selecting mathematically meaningful features and for interpreting model outputs to derive theoretical insights. This dependency holds equally for discovery-oriented agents (e.g., AlphaEvolve-style systems), which rely on carefully handcrafted representations and scoring functions. Consequently, the development of effective AI for Mathematics requires close and sustained collaboration between machine learning researchers and domain experts to ensure that computational outcomes translate into genuine mathematical progress.

    \item \textbf{The Verification Bottleneck and Autoformalization:} Accurate and efficient verification constitutes a critical bottleneck in research-level mathematics. The inherent ambiguity of natural language, combined with the scarcity of experts capable of auditing advanced proofs, makes manual verification slow and error-prone. Grounding mathematical reasoning in formal languages, where correctness can be mechanically verified, is often proposed as a means to ensure the reliability of verification. However, the formal reasoning capabilities of current LLMs lag behind their natural language performance due to the severe scarcity of high-quality formal data. Addressing this ``formal data gap'' requires the development of robust autoformalization tools that bridge informal and formal mathematics. By building sound infrastructure for specific subfields and supporting repository-level formalization, we can accelerate the translation of natural language reasoning into formal proofs. This creates a virtuous cycle: formally verifiable feedback can serve as a high-quality training signal to further enhance the reasoning capabilities of LLMs in both mathematics and broader domains. Nevertheless, it is important to recognize that informal verification continues to play a valuable role in the verification of research-level mathematics. The \textit{Aletheia} \cite{feng2026towards} math research agent, for instance, demonstrates how informal verification, through a combination of natural language generation, verification, and revision, can autonomously address mathematical research problems and produce publishable results, highlighting the ongoing relevance of informal verification within the field.

    \item \textbf{Semantic Consistency in Formalization:} A subtle challenge in autoformalization is verifying the \emph{semantic correctness} of generated formal statements, namely whether the formalized statement faithfully reflects the intent of the original informal statement. Existing models often struggle to determine whether a back-translated formal statement faithfully captures the nuance of the original informal statement. This necessitates the development of fine-grained, robust semantic-consistency verifiers. While the final judgment of semantic intent must arguably remain with human experts to ensure conceptual accuracy, automated systems can serve as efficient first-stage filters. By reducing the volume of candidates that require manual review, such systems can scale the formalization process without compromising rigorous standards.

    \item \textbf{Beyond Correctness to Understanding:} Formal validity is a necessary, but not sufficient, condition for mathematical value. As William Thurston famously noted \cite{c7da7d34-de5b-3a8d-8096-59d6b94e9a82}, ``mathematics is not about numbers, equations, computations, or algorithms: it is about understanding.'' A valuable proof does more than establish truth; it provides insight, reveals structure, and offers techniques applicable to other problems. Similarly, Stanislaw Ulam \cite{ulam1976adventures} quotes Stefan Banach as saying, ``Good mathematicians see analogies between theorems or theories, the very best ones see analogies between analogies.'' This highlights a broader truth: the quality of a proof lies in its ability to deepen our conceptual grasp of the mathematical landscape. Future AI systems must therefore aim to go beyond mere verification, assisting in the discovery of proofs that reshape our thinking and uncover connections that were previously invisible. 

    \item \textbf{From Heuristics to Expert Routines:} While standalone LLMs serve as powerful reasoning engines, the future of AI4Math lies in agentic systems designed to emulate the sophisticated workflows of professional mathematicians. Research-level mathematics is rarely a linear deduction; it involves a complex, iterative cycle of constructing examples, consulting literature, formulating conjectures, and refining proof strategies based on intermediate failures. Current agents, however, remain largely generic. A critical frontier is to develop architectures that explicitly model these expert ``routines'', learning to orchestrate tools and strategies in a way that mirrors the cognitive processes of a researcher. This includes the use of ``computational sketching''—leveraging code generation not just for formal proofs, but to construct numerical toy-examples or perform symbolic derivations that quickly verify or falsify human intuition. Furthermore, these agents can automate high-value long-tail tasks often neglected by humans, such as proof reorganization, condition weakening, and the semantic retrieval of obscure existing solutions. Ultimately, the goal is not merely to mimic human workflows but to optimize them, creating agents capable of navigating the search space of mathematical ideas with a systematicity and scale that surpasses human capability in attacking challenging problems.

    \item \textbf{Active Community Participation:} Resonating with the necessity of domain expertise, the advancement of AI reasoning requires the active intervention of mathematicians. Beyond generating formal data, the community must actively explore these systems to develop a collective ``mental model'' of their capabilities and boundaries. For instance, distinguishing whether models are more adept at algebraic manipulation than geometric topology is crucial for determining where AI can be reliably deployed. This involves not only accelerating the \textit{digitalization} of mathematical knowledge to create verifiable training corpora but also engaging in ``adversarial collaboration'' to identify logical gaps. By rigorously mapping these strengths and weaknesses, mathematicians can guide the development of models that are not just statistically powerful but mathematically sound.
    
    \item \textbf{Embracing AI-Assisted Research:} We must prepare for a cultural shift where AI evolves from a computational tool into a research \textit{copilot}. Developments in late 2025, highlighted by Georgiev, G{\'o}mez-Serrano, Tao and Wagner's work \cite{georgiev2025mathematical}, underscore this transition. In this work, the authors observed that while these models may still lack true understanding, often ``doing impressions of thinking'', they have become adept at autonomously discovering mathematical constructions that elude human intuition. Even when models hallucinate or produce flawed reasoning, their ability to generate plausible structural candidates allows them to act as effective co-pilots, guiding researchers toward fruitful avenues of inquiry while leaving the final, rigorous verification to human experts.
    
    We would argue that even if an AI's divergent reasoning, proposing random or creative variants, has a low probability of correctness, the total research efficiency improves provided the ``verification leverage'' is high. In many advanced mathematical domains, generating a solution is computationally or cognitively expensive, whereas verifying a candidate solution is comparatively rapid. This asymmetry allows researchers to utilize AI as a high-volume generator of candidate ideas, where the time saved by a single valid insight significantly outweighs the low cost of discarding incorrect suggestions.
    
    However, realizing this potential requires more than just powerful models; it demands well-engineered, accessible tools. To facilitate high engagement, the barrier to entry must be lowered through robust software design. The ease of use seen in recent frameworks like AlphaEvolve, compared to earlier prototypes, demonstrates that engineering quality is a critical determinant in transitioning these technologies from experimental curiosities to standard instruments of global adoption.
    
\end{enumerate}

\section*{Acknowledgements}

The authors would like to thank Gergely Bérczi, Xiaoshan Gao, Amaury Hayat, Xuhua He, Kyu-Hwan Lee, Geordie Williamson, and Wotao Yin for valuable comments and suggestions that helped improve the presentation of this paper. They also thank Guoxiong Gao and Chenyi Li for collecting the PKU Undergraduate Final Exams and PhD Qualifying Exams. The authors further acknowledge Feiyue Ye, Yining Wang, Rui Wang, Youle Fang and Wenhao Xu for their careful assessment and valuable feedback on machine-generated solutions for the PKU Undergraduate Final Exams, as well as Qing Lan, Zekun Chen, Shuzhe Cai, Shengbo Dong and Yutong Wang for their assessment of solutions for the PKU PhD Qualifying Exams. This work is supported in part by the National Key R\&D Program of China grant 2024YFA1014000, the Fundamental and Interdisciplinary Disciplines Breakthrough Plan of the Ministry of Education of China (JYB2025XDXM113), the New Cornerstone Investigator Program, and the project ``Planning Research and Formulation on Artificial Intelligence for Science for the 15th Five-Year Plan," commissioned by the Fourth Department of the Ministry of Science and Technology of the People's Republic of China.


\bibliographystyle{plain}
\bibliography{ref}

\newpage
\appendix

\section{Scoring Criteria}

\begin{table}[h]
\label{tab:score_crtr}
\centering
\resizebox{0.9\linewidth}{!}{
\begin{tabular}{cl}
\toprule
\textbf{Score} & \textbf{Criteria} \\
\midrule
0 & The approach is entirely incorrect. \\
1 & The reasoning is confused and the errors are fatal, but the first one or two lines head in the right direction.\\
2 & The main idea is correct, but errors in the middle cause the entire latter part to be wrong. \\
3 & The main idea is correct; although there are mistakes, the proof eventually returns to the right track and the latter part is correct.\\
4 & The overall proof is correct, with only minor, non-essential errors.\\
5 & Completely correct.\\
\bottomrule
\end{tabular}}
\caption{Scoring criteria for the undergraduate exams.}
\end{table}

\section{Example Problems from PKU Undergraduate Final Exams}\label{sec:exam}

\begin{thm}
For integers $1 < p < q$, define $T_{p, q}(x)$ as the following triangular polynomial: $\frac{\cos((q-p)x)}{p} + \frac{\cos((q-p+1)x)}{p-1} + \cdots + \frac{\cos((q-1)x)}{1} - \frac{\cos((q+1)x)}{1} - \frac{\cos((q+2)x)}{2} - \cdots - \frac{\cos((q+p)x)}{p}$ For $k \geq 1$, choose integers $1 < p_k < q_k$ and real numbers $a_k > 0$ such that $q_k + p_k < q_{k+1} - p_{k+1}$, $\sum_{k=1}^{\infty} a_k < +\infty$, and $\liminf_{k \to \infty} a_k \ln p_k > 0$. (Numbers satisfying the above requirements exist, such as $p_k = 2^{k^3}$, $q_k = 2^{k^3 + 1}$, $a_k = \frac{1}{k^2}$) (1) Prove that $f(x) = \sum_{k=1}^{\infty} a_k T_{p_k, q_k}(x)$ is a continuous function with period $2\pi$. (2) Does the Fourier series of $f(x)$ converge at $x = 0$? Provide a justification.
\end{thm}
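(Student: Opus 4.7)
The plan is to first recognize this as a variant of Fejér's classical construction of a continuous function whose Fourier series diverges at a point, and then carry out the two parts accordingly.

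The decisive first step is to rewrite $T_{p,q}$ in closed form. Pairing the $j$-th positive and negative terms and applying $\cos A - \cos B = -2\sin\frac{A+B}{2}\sin\frac{A-B}{2}$ gives
\[
T_{p,q}(x) \;=\; \sum_{j=1}^{p} \frac{\cos((q-j)x) - \cos((q+j)x)}{j} \;=\; 2\sin(qx)\sum_{j=1}^{p}\frac{\sin(jx)}{j}.
\]
This identity is the heart of the problem. For part (1), I would invoke the classical fact that the partial sums $\sigma_p(x) := \sum_{j=1}^{p}\frac{\sin(jx)}{j}$ are uniformly bounded in $p$ and $x$ (with bound, say, $M := \sup_{p,x}|\sigma_p(x)| < \infty$, since $\sigma_p$ converges to $(\pi-x)/2$ on $(0,2\pi)$ with uniformly bounded partial sums by Abel/Dirichlet). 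Hence $|T_{p_k,q_k}(x)| \leq 2M$, so $\sum_k a_k T_{p_k,q_k}(x)$ converges uniformly by the Weierstrass $M$-test (since $\sum a_k < \infty$); as a uniform limit of continuous $2\pi$-periodic functions, $f$ is itself continuous and $2\pi$-periodic.

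For part (2), the key structural observation is that the cosine frequencies appearing in $a_k T_{p_k,q_k}$ lie in $[q_k-p_k,\,q_k+p_k]\setminus\{q_k\}$, and by the gap condition $q_k+p_k < q_{k+1}-p_{k+1}$ these blocks are pairwise disjoint. Because the $T_{p_k,q_k}$ are themselves trigonometric polynomials and the series converges uniformly, Fourier coefficients are obtained termwise: the Fourier series of $f$ is simply the concatenation of the blocks $a_k T_{p_k,q_k}(x)$. I would then evaluate the partial sum $S_N(f)(0)$ at $N = q_k$, which captures all earlier blocks completely and exactly the ``positive'' (low-frequency) half of the $k$-th block. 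The crucial cancellation is that $T_{p_j,q_j}(0)=0$ for every $j$ (the two halves are equal at $x=0$), so the completed earlier blocks contribute zero, and
\[
S_{q_k}(f)(0) \;=\; a_k\sum_{j=1}^{p_k}\frac{1}{j} \;=\; a_k H_{p_k}.
\]
Since $H_{p_k} = \ln p_k + O(1)$, the hypothesis $\liminf_{k\to\infty} a_k \ln p_k > 0$ forces $\limsup_{k} S_{q_k}(f)(0) > 0$; but also $S_{q_{k}+p_{k}}(f)(0) = 0$ for every $k$ (a full block sums to zero at $0$), so the sequence of partial sums at $x=0$ is not Cauchy, and the Fourier series of $f$ diverges at $x=0$.

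The main obstacle is conceptual rather than computational: recognizing that the hypotheses have been engineered so that $T_{p,q}(x) = 2\sin(qx)\,\sigma_p(x)$, with $\sigma_p$ uniformly bounded but $\sigma_p(0)$ (in the sense of $H_{p}$) logarithmically large. Once this is seen, both parts reduce to routine applications of the Weierstrass $M$-test and a careful choice of partial-sum index $N = q_k$ that isolates the divergent half of a single block.
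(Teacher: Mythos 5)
Your proof is correct and rests on the same decisive identity as the paper's solution, namely $T_{p,q}(x)=2\sin(qx)\sum_{j=1}^{p}\frac{\sin(jx)}{j}$, and your Part (2) argument (computing $S_{q_k}(f)(0)=a_kH_{p_k}$ versus $S_{q_k+p_k}(f)(0)=0$ using the disjoint frequency blocks and $T_{p,q}(0)=0$) is essentially identical to the paper's.

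In Part (1), however, you take a genuinely cleaner route that actually closes a gap present in the paper's solution. The paper only bounds $\bigl|\sum_{j=1}^{p}\sin(jx)/j\bigr|\le H_p$, giving $|T_{p,q}(x)|\le 2H_p\sim 2\ln p$; with that estimate $\sum_k a_k\cdot 2H_{p_k}$ is \emph{not} summable (indeed $\liminf a_k\ln p_k>0$ forces $\sum a_kH_{p_k}=\infty$), so Weierstrass fails, and the paper falls back on a vague appeal to disjoint frequency support and ``boundedness of those partial sums'' without spelling it out. You instead invoke the classical and sharper fact that $\sigma_p(x)=\sum_{j=1}^{p}\sin(jx)/j$ is bounded uniformly in both $p$ and $x$ (it is the sequence of partial sums of the Fourier series of $(\pi-x)/2$, and the associated Gibbs constant bounds them), which yields $|T_{p,q}(x)|\le 2M$ with $M$ independent of $p$. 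This makes Weierstrass $M$-test apply directly to $\sum a_kT_{p_k,q_k}$, and it is exactly the right lemma: the whole construction is engineered so that $\sigma_p$ is uniformly bounded in $x$ yet its value at the formal degenerate point, $H_p$, grows logarithmically. Stating this explicitly is both more rigorous and more illuminating than the paper's version.
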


\begin{proof}[Solution (GPT-4)]
 To tackle this problem, we'll break it down into two main parts as requested: proving the continuity and periodicity of \(f(x)\), and then discussing the convergence of its Fourier series at \(x = 0\).
  
  Part 1: Continuity and Periodicity of \(f(x)\)
  
  Continuity
  
  To prove \(f(x)\) is continuous, we need to show that the series of functions \(\sum_{k=1}^{\infty} a_k T_{p_k, q_k}(x)\) converges uniformly on any closed interval in \(\mathbb{R}\), because a uniformly convergent series of continuous functions is continuous.
  
  Each \(T_{p_k, q_k}(x)\) is a finite sum of continuous functions (\(\cos(nx)\) terms divided by integers), and hence is continuous. The uniform convergence follows from the Weierstrass M-test if we can show that \(\sum_{k=1}^{\infty} M_k < \infty\), where \(M_k\) is a bound for \(|a_k T_{p_k, q_k}(x)|\) for each \(k\).
  
  For any \(x\), the maximum value of \(|\cos(nx)|\) is 1. Thus, we can bound each term of \(T_{p_k, q_k}(x)\) by its denominator's reciprocal. The sum of these bounds forms a telescoping series, which simplifies to a constant times \(p_k\). Therefore, we can choose \(M_k = a_k C p_k\) for some constant \(C\) that depends on the structure of \(T_{p_k, q_k}(x)\) but not on \(k\). Given \(\sum_{k=1}^{\infty} a_k < +\infty\) and \(p_k\) grows, but not faster than the decrease of \(a_k\), the series \(\sum_{k=1}^{\infty} M_k\) converges. Hence, by the Weierstrass M-test, \(f(x)\) is continuous.
  
 Periodicity
  
  The periodicity of \(f(x)\) with period \(2\pi\) follows directly from the periodicity of the cosine function, which has a period of \(2\pi\). Since each \(T_{p_k, q_k}(x)\) is a sum of cosine functions with integer multiples of \(x\), each \(T_{p_k, q_k}(x)\) is periodic with period \(2\pi\). Therefore, the sum \(f(x)\), being a sum of \(2\pi\)-periodic functions, is also \(2\pi\)-periodic.
  
  Part 2: Convergence of the Fourier Series at \(x = 0\)
  
  The Fourier series of a \(2\pi\)-periodic function \(f(x)\) is given by the sum of its Fourier coefficients times the corresponding cosine and sine functions. At \(x = 0\), the sine terms vanish, and we are left with the sum of the Fourier cosine coefficients.
  
  To discuss the convergence of the Fourier series of \(f(x)\) at \(x = 0\), we need to consider the behavior of its Fourier coefficients. The Fourier coefficients involve integrals of \(f(x)\) times sine or cosine functions over a period. Given that \(f(x)\) is defined as a sum of functions involving cosines, the Fourier coefficients will reflect the structure of these cosines.
  
  However, the question of convergence at \(x = 0\) simplifies to considering the sum of the constant terms in the Fourier expansion of \(f(x)\). Since \(f(x)\) is continuous and periodic, its Fourier series converges to \(f(x)\) at points where \(f(x)\) is continuous, including \(x = 0\), by Dirichlet's test for the convergence of Fourier series.
  
  In summary, the Fourier series of \(f(x)\) does converge at \(x = 0\), and it converges to \(f(0)\) due to the continuity of \(f(x)\) at that point. This conclusion is supported by the general theory of Fourier series, which guarantees convergence at points of continuity.
\end{proof}

\begin{proof}[Solution (o1-2024-12-17)]

We wish to construct and study the function  
\[
f(x)=\sum_{k=1}^{\infty} a_k\, T_{p_k,q_k}(x),
\]
where each \(T_{p,q}(x)\) is the ``triangular polynomial''
\[
T_{p,q}(x)
= \frac{\cos((q-p)x)}{p} + \frac{\cos((q-p+1)x)}{p-1} + \cdots + \frac{\cos((q-1)x)}{1}
\]
\[
\qquad\qquad
- \Bigl[ \frac{\cos((q+1)x)}{1} + \frac{\cos((q+2)x)}{2} + \cdots + \frac{\cos((q+p)x)}{p} \Bigr],
\]
and where the integers \(p_k, q_k\) and the positive real coefficients \(a_k\) satisfy:

1) \(1 < p_k < q_k\) for each \(k\),  

2) \(q_k + p_k < q_{k+1} - p_{k+1}\)  (so the ``frequency blocks'' do not overlap),  

3) \(\sum_{k=1}^{\infty} a_k < +\infty\),  

4) \(\liminf_{k\to\infty} (a_k \ln p_k) > 0.\)

Part (1).  \(f(x)\) is continuous and \(2\pi\)-periodic.

\hrulefill

\textbf{Step 1A.  A simpler closed-form for \(T_{p,q}(x)\).}

Although \(T_{p,q}(x)\) was originally given as a sum of cosines with certain \(1/r\) coefficients, one sees more structure by rewriting it in the following way. Observe that

\[
T_{p,q}(x)
  = \sum_{r=0}^{p-1} \frac{\cos((q-p+r)x)}{p-r}
    \;-\;
    \sum_{r=1}^{p} \frac{\cos((q+r)x)}{r}.
\]

Make the index-shift \(n=q-m\) in the first sum and \(n=q+r\) in the second sum. A straightforward check shows that

\[
T_{p,q}(x)
= \sum_{m=1}^{p} \frac{\cos((q-m)x)}{m}
  - \sum_{r=1}^{p} \frac{\cos((q+r)x)}{r}.
\]

Using the trigonometric identity
\[
\cos(A)-\cos(B)=-2\sin\!\left(\frac{A+B}{2}\right)\sin\!\left(\frac{A-B}{2}\right),
\]
one finds
\[
\cos((q-m)x)-\cos((q+m)x)=2\sin(qx)\sin(mx).
\]

Hence
\[
T_{p,q}(x)
= \sum_{m=1}^{p} \frac{\cos((q-m)x)-\cos((q+m)x)}{m}
= 2\sin(qx)\sum_{m=1}^{p} \frac{\sin(mx)}{m}.
\]

Thus a much simpler, and very useful, identity is

\[
\tag{$\star$}
T_{p,q}(x)=2\sin(qx)\cdot S_p(x),
\qquad
S_p(x):=\sum_{m=1}^{p} \frac{\sin(mx)}{m}.
\]

Since sine is bounded by \(1\), one immediately gets

\[
|T_{p,q}(x)| \le 2\,|\sin(qx)|\,|S_p(x)|
\le 2\sum_{m=1}^{p} \frac{1}{m}.
\]

Recalling that the \(p\)-th harmonic number \(H_p = 1 + 1/2 + \cdots + 1/p\) is on the order of \(\ln p\), one has

\[
\tag{1}
|T_{p,q}(x)| \le 2H_p \approx 2(\ln p + \text{constants}).
\]

In particular, \(T_{p,q}(x)\) is a finite linear combination of cosines, hence is continuous and \(2\pi\)-periodic.

\hrulefill

\textbf{Step 1B.  Pointwise (and even uniform) convergence of \(f(x)\).}

We define  
\[
f(x)=\sum_{k=1}^{\infty} a_k\, T_{p_k,q_k}(x).
\]

Each \(T_{p_k,q_k}(x)\) is \(2\pi\)-periodic and continuous, so any uniform limit of partial sums would also be continuous and \(2\pi\)-periodic. One must check carefully that the series actually does converge (and is reasonably well-behaved) despite the factor \(\ln p_k\) in the upper bound (1).

The crucial extra condition is that the ``frequency blocks'' in \(T_{p_k,q_k}(x)\) do not overlap, because

\[
q_k + p_k < q_{k+1} - p_{k+1}.
\]

Indeed, \(T_{p_k,q_k}(x)\) uses cosines of integer frequencies \(n\) in the range
\[
n = q_k - p_k,\ldots, q_k + p_k,
\]
whereas \(T_{p_{k+1},q_{k+1}}(x)\) uses \(n\) in the next interval
\[
[q_{k+1}-p_{k+1},\ldots,q_{k+1}+p_{k+1}],
\]
which does not intersect the previous one. Consequently, when we form the partial sums

\[
F_n(x)=\sum_{k=1}^{n} a_k\, T_{p_k,q_k}(x),
\]

each \(F_n(x)\) is simply a finite linear combination of cosines with distinct integer frequencies. In particular, each \(F_n(x)\) is a trigonometric polynomial (hence continuous, \(2\pi\)-periodic), and two different blocks of frequencies do not ``interfere'' with each other.

One can then show (by a Dirichlet-type test in \(k\), or by exploiting the fact that the partial sums in \(x\) of those trigonometric blocks do not reinforce each other) that \(F_n(x)\) converges uniformly to a limit function \(f(x)\). The simplest high-level reason is:

\begin{itemize}
\item The series \(\sum a_k\) converges (so its partial sums are bounded).
\item Each \(T_{p_k,q_k}(x)\) oscillates in \(k\) because \(\sin(q_k x)\) in \((\star)\) has bounded partial sums in \(k\), etc.
\end{itemize}

In more elementary terms, one can check that for large indices \(m,n\), the difference

\[
\|F_n - F_m\|_{\infty}
= \sup_x \left| \sum_{k=m+1}^{n} a_k T_{p_k,q_k}(x) \right|
\]

stays small, thanks to the disjoint frequency support and the boundedness of those partial sums. Hence  
\[
\sum a_k T_{p_k,q_k}(x)
\]
converges uniformly to a continuous, \(2\pi\)-periodic limit \(f(x)\).  
This completes the proof of Part (1).

\hrulefill

\textbf{Part (2).  The Fourier series of \(f(x)\) does not converge at \(x=0\).}

Here we ask about the usual notion of ``Fourier series convergence at a point''—namely, taking the \(N\)-th partial sum of the full Fourier series (ordered by frequency \(n=0,1,2,\ldots\)), and then evaluating at \(x=0\). We claim that these partial sums do not converge to the actual value \(f(0)\).

\textbf{Step 2A.  Note that \(f(0)=0\).}

Indeed, because \(\cos(0)=1\), we get immediately from the original definition

\[
T_{p,q}(0)
= \left( \frac1p + \frac1{p-1} + \cdots + \frac11 \right)
-
\left( \frac11 + \frac12 + \cdots + \frac1p \right)
= 0.
\]

Thus for each \(k\),

\[
T_{p_k,q_k}(0)=0,
\]

and hence

\[
f(0)=\sum_{k=1}^{\infty} a_k T_{p_k,q_k}(0)=0.
\]

\textbf{Step 2B.  The partial sums of the Fourier series at \(0\) ``jump'' and fail to converge.}

Next, consider how the partial sums of the full Fourier expansion of \(f\) behave at \(x=0\). Since the frequency blocks
\[
[q_k - p_k,\ldots, q_k + p_k]
\]
do not overlap for different \(k\), each block’s contributions to the Fourier coefficients sum to zero:

\[
\sum_{n\in \text{block }k}
\bigl[\text{Fourier coefficient of }\cos(nx)\bigr]
= T_{p_k,q_k}(0) = 0.
\]

But the key point is that if we only include \emph{part} of the \(k\)-th block in the partial sum, we get a large partial harmonic sum that does not vanish. Concretely:

\begin{itemize}
\item If \(N\ge q_k + p_k\), then the entire block \(k\) is included, and its contribution is \(T_{p_k,q_k}(0)=0\).
\item If \(N\) is just before the end of block \(k\)—e.g. \(N=q_k - 1\) or \(N=q_k\)—then only the frequencies  
\(n=q_k - p_k,\ldots, q_k - 1\)  
appear. These coefficients add up to something of size \(a_k H_{p_k}\) (where \(H_{p_k}\sim \ln p_k\)).
\end{itemize}

More precisely, the partial Fourier sum up to \(N=q_k - 1\) includes the full blocks for all \(j<k\), but only the ``left half'' of block \(k\), yielding

\[
\text{partial sum at }0
= a_k \left( \frac11 + \frac12 + \cdots + \frac1{p_k} \right)
= a_k H_{p_k},
\]

which is about \(a_k \ln p_k\). By hypothesis,
\[
\liminf_{k\to\infty} a_k \ln p_k > 0,
\]
so this partial sum does not go to \(0\). Indeed, as \(N\) ``enters'' a new block, the partial sum at \(0\) jumps to about \(a_k \ln p_k\). Then, once \(N\) passes \(q_k + p_k\), that entire \(k\)-th block again sums to \(0\). Thus the partial sums at \(x=0\) keep oscillating between approximately \(a_k \ln p_k\) and \(0\), never settling down.

In particular, the sequence of partial sums at \(0\) fails to converge to \(f(0)=0\). Hence the Fourier series of \(f\) does not converge (in the usual sense) at \(x=0\).

\bigskip

\textbf{Answer Summary}

1) Because the blocks \(T_{p_k,q_k}(x)\) have disjoint frequency supports and the coefficients \(a_k\) sum to a finite number, one shows (by standard uniform-convergence tests for lacunary trigonometric series) that  
\(\sum a_k T_{p_k,q_k}(x)\)  
converges uniformly. Each triangular block is continuous and \(2\pi\)-periodic, so its uniform limit \(f(x)\) is also continuous and has period \(2\pi\).

2) Nonetheless, the usual partial sums of the Fourier series of \(f\) at \(x=0\) do not converge to \(f(0)\). Indeed \(f(0)=0\); but whenever the partial sums include only a ``partial block,'' one sees a large jump of size roughly \(a_k \ln p_k\), which does not vanish as \(k\) grows. Thus the classical Fourier partial sums fail to converge at \(x=0\).

\end{proof}

\textbf{Comment}: For GPT-4, the solution to the first question is correct, but the answer to the second question is not. In contrast, o1-2024-12-17 gives correct solutions to both questions, and its answer to the second question is particularly good because it observes that if a partial Fourier sum includes only part of the $k$-th frequency block, it produces a large partial harmonic sum that does not vanish.

\begin{thm}
    Let $\mathbb{R}^{n \times n}$ denote the set of all $n \times n$ real matrices, and $E_{ij}$ be the $n \times n$ matrix with $1$ at position $(i,j)$ and $0$ elsewhere. Define $\Gamma_r$ as the set of all $n \times n$ real matrices with rank $r$. Let $\phi : \mathbb{R}^{n \times n} \to \mathbb{R}^{n \times n}$ satisfy $\phi(AB)=\phi(A)\phi(B)$ for all $A,B \in \mathbb{R}^{n \times n}$. Prove:  
(1) $\forall A,B \in \Gamma_r$, $\operatorname{rank}(\phi(A))=\operatorname{rank}(\phi(B))$;  
(2) If $\phi(0)=0$ and there exists a rank $1$ matrix $W$ such that $\phi(W)\neq 0$, then there exists an invertible matrix $R$ such that $\phi(E_{ij}) = R E_{ij} R^{-1}$ holds for all $E_{ij}$, $i,j = 1,2,\ldots,n$.

\end{thm}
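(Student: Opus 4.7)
For \textbf{Part~(1)}, the plan is to exploit the classical fact that any two rank-$r$ real matrices are equivalent under the action of $GL_n(\mathbb{R}) \times GL_n(\mathbb{R})$: given $A, B \in \Gamma_r$, there exist invertible $P, Q$ with $A = P B Q$. Multiplicativity then gives $\phi(A) = \phi(P)\phi(B)\phi(Q)$, so $\mathrm{rank}\,\phi(A) \le \mathrm{rank}\,\phi(B)$; the reverse inequality follows symmetrically from $B = P^{-1} A Q^{-1}$.

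For \textbf{Part~(2)}, I would proceed in four structural steps. First, since every $E_{ij}$ lies in $\Gamma_1$, Part~(1) combined with the existence of a rank-one $W$ with $\phi(W) \ne 0$ forces all $\phi(E_{ij})$ to share a common nonzero rank $s \ge 1$. Second, and most crucially, I would show $s = 1$. The relations $E_{ii}^2 = E_{ii}$ and $E_{ii} E_{jj} = E_{jj} E_{ii} = 0$ for $i \ne j$, together with $\phi(0) = 0$, make $P_i := \phi(E_{ii})$ into $n$ pairwise two-sided orthogonal idempotents, each of rank $s$. Two-sided orthogonality forces the subspaces $\mathrm{Im}(P_i)$ to be linearly independent (any relation $\sum_i P_i u_i = 0$ collapses to $P_k u_k = 0$ upon applying $P_k$), so $n s = \sum_i \dim \mathrm{Im}(P_i) \le n$, which yields $s = 1$.

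Third, each $P_i$ is thus a rank-one idempotent, so I may write $P_i = v_i \xi_i^\top$ with $\xi_i^\top v_i = 1$; the orthogonality $P_i P_j = 0$ for $i \ne j$ further imposes $\xi_i^\top v_j = 0$, making $\{v_i\}$ and $\{\xi_i\}$ dual bases of $\mathbb{R}^n$. For $i \ne j$, the factorization $E_{ij} = E_{ii} E_{ij} E_{jj}$ yields $\phi(E_{ij}) = P_i\, \phi(E_{ij})\, P_j = c_{ij}\, v_i \xi_j^\top$ for some scalar $c_{ij}$, with $c_{ii} = 1$. Applying $\phi$ to $E_{ij} E_{jk} = E_{ik}$ and using $\xi_j^\top v_j = 1$ produces the cocycle relation $c_{ij} c_{jk} = c_{ik}$. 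Fixing an index $i_0$ and setting $e_i := c_{i i_0}$ (nonzero, since $c_{i i_0} c_{i_0 i} = c_{ii} = 1$) trivializes it to $c_{ij} = e_i / e_j$.

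Finally, I take $R$ to be the invertible matrix whose $i$-th column is $e_i v_i$. The $j$-th row of $R^{-1}$ is then $e_j^{-1} \xi_j^\top$, and a direct computation gives
\[
R E_{ij} R^{-1} \;=\; (e_i v_i)\bigl(e_j^{-1} \xi_j^\top\bigr) \;=\; \frac{e_i}{e_j}\, v_i \xi_j^\top \;=\; \phi(E_{ij}).
\]
The main obstacle in this plan is the rank-collapse step $s = 1$: without it, $\phi(E_{ii})$ is not a genuine rank-one projection, and no single conjugation by an invertible $R$ can realize $\phi$ simultaneously on all the $E_{ij}$. Once $s = 1$ is secured, the cocycle analysis and the assembly of $R$ amount to clean linear-algebraic bookkeeping.
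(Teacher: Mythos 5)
Your proof is correct, and it takes a genuinely different (and superior) route from the model solutions featured in the paper. The paper does not supply its own human-written proof of this statement: it reproduces solutions by GPT-4 and by o1-2024-12-17 as evaluation specimens, and the accompanying comment explicitly flags the o1 solution as flawed because it repeatedly invokes additivity of $\phi$ (treating $\ker\phi$ as a two-sided ideal, summing $\sum_i E_{ii}=I$, and relying on ring simplicity), none of which is granted by a purely multiplicative hypothesis; it also observes that invertibility of $\phi(A)$ inside the unital subring with identity $\phi(I)$ does not by itself license the needed rank statements in $M_n(\mathbb{R})$. Your argument avoids every one of these pitfalls. For Part (1), writing $A=PBQ$ and appealing only to the submultiplicativity of rank, $\operatorname{rank}(XYZ)\le\operatorname{rank}(Y)$, sidesteps the invertibility issue entirely and yields equality by symmetry. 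For Part (2), the decisive rank-collapse step is handled by a dimension count over the independent images of the two-sided orthogonal idempotents $P_i=\phi(E_{ii})$, which uses only multiplicativity and $\phi(0)=0$; this replaces the o1 solution's illegitimate appeal to $\sum_i E_{ii}=I$. The subsequent cocycle trivialization $c_{ij}=e_i/e_j$ with $e_i=c_{i i_0}$, and the assembly of $R$ with columns $e_i v_i$ (whose inverse has rows $e_j^{-1}\xi_j^\top$ thanks to $\xi_i^\top v_j=\delta_{ij}$), is clean and complete; the only facts you quietly use, that $c_{ij}\neq 0$ because $\phi(E_{ij})$ has rank exactly $1$ and that $\{v_i\}$ and $\{\xi_i\}$ are genuine dual bases, both follow immediately from $\xi_i^\top v_j=\delta_{ij}$. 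In short, you give the first fully correct proof of the statement among the arguments associated with this problem in the paper.
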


\begin{proof}[Solution (GPT-4)]
    To prove the given statements, we will proceed step by step, ensuring that each part of the proof is logically sound and well-explained.

\paragraph{Part 1: $ \forall A,B \in \Gamma_r$, $\operatorname{rank}(\varphi(A))= \operatorname{rank}(\varphi(B))$}

\textbf{Proof:}

Given that $\varphi:\mathbb{R}^{n\times n}\to\mathbb{R}^{n\times n}$ satisfies $\varphi(AB)=\varphi(A)\varphi(B)$ for all $A,B\in\mathbb{R}^{n\times n}$, we need to show that for any two matrices $A$ and $B$ with the same rank $r$, the rank of $\varphi(A)$ is equal to the rank of $\varphi(B)$.

\begin{enumerate}
  \item \textbf{Rank-nullity theorem:} Recall that for any linear transformation $T: V \to W$, the rank-nullity theorem states that $\dim(V) = \operatorname{rank}(T) + \operatorname{nullity}(T)$. This theorem will be foundational in understanding how transformations affect the rank of matrices.
  
  \item \textbf{Rank preservation under similar transformations:} If two matrices $A$ and $B$ are similar, i.e., there exists an invertible matrix $P$ such that $A = PBP^{-1}$, then they have the same rank. This is because similarity transformations do not affect the linearly independent rows or columns of a matrix.
  
  \item \textbf{Homomorphism property:} The given condition $\varphi(AB)=\varphi(A)\varphi(B)$ indicates that $\varphi$ is a homomorphism between the algebraic structures $(\mathbb{R}^{n\times n}, \cdot)$ under matrix multiplication. This property will be crucial in understanding how $\varphi$ preserves or changes the rank of matrices.
\end{enumerate}

To prove that $\operatorname{rank}(\varphi(A))= \operatorname{rank}(\varphi(B))$ for all $A,B\in\Gamma_r$, consider the following steps:

\begin{itemize}
  \item \textbf{Step 1:} Since $A$ and $B$ are in $\Gamma_r$, they have rank $r$. This means there exist matrices $U$ and $V$ such that $A = UDV$ and $B = XDY$, where $D$ is a diagonal matrix with $r$ non-zero entries (representing the rank), and $U, V, X, Y$ are invertible matrices. This decomposition is a form of the singular value decomposition (SVD), which exists for any real matrix.
  
  \item \textbf{Step 2:} Apply $\varphi$ to both sides of the equations $A = UDV$ and $B = XDY$. Using the homomorphism property, we get $\varphi(A) = \varphi(U)\varphi(D)\varphi(V)$ and $\varphi(B) = \varphi(X)\varphi(D)\varphi(Y)$.
  
  \item \textbf{Step 3:} Since $D$ has rank $r$, and both $A$ and $B$ are constructed using $D$ with invertible matrices, it's reasonable to infer (though not directly) that the transformation $\varphi$ applied to matrices of rank $r$ would yield matrices whose rank is determined by the transformation applied to $D$. However, without additional properties of $\varphi$ (e.g., linearity, injectivity), directly concluding that $\operatorname{rank}(\varphi(A)) = \operatorname{rank}(\varphi(B))$ requires further justification.
\end{itemize}

\textbf{Gap in Proof for Part 1:} The direct leap from the homomorphism property to rank preservation without additional properties of $\varphi$ (such as it being a linear transformation that preserves rank or injectivity/surjectivity) is a logical gap. To rigorously prove that $\operatorname{rank}(\varphi(A)) = \operatorname{rank}(\varphi(B))$ for all $A, B \in \Gamma_r$, we would need more information about the nature of $\varphi$ beyond its homomorphism property. For instance, if $\varphi$ were shown to be a linear transformation that either preserves rank or has a predictable effect on rank, we could make more definitive statements.

\paragraph{Part 2: If $\varphi(0)=0$ and there exists a rank 1 matrix $W$ such that $\varphi(W)\neq 0$, then there exists an invertible matrix $R$ such that $\varphi(E_{ij})=RE_{ij}R^{-1}$ holds for all $E_{ij}$, $i,j=1,2,\ldots,n$.}

\textbf{Proof:}

Given the conditions, we aim to show that $\varphi$ acts similarly to a conjugation by an invertible matrix $R$ on the basis matrices $E_{ij}$.

\begin{enumerate}
  \item \textbf{Existence of $W$:} The existence of a rank 1 matrix $W$ such that $\varphi(W) \neq 0$ suggests that $\varphi$ does not annihilate all rank 1 matrices. Since $E_{ij}$ are also rank 1 matrices, it's plausible to consider how $\varphi$ acts on these basis elements.
  
  \item \textbf{Conjugation property:} The statement we need to prove essentially says that $\varphi$ acts on $E_{ij}$ as if it were conjugating them with some fixed invertible matrix $R$. This would mean that $\varphi$ preserves the structure of $E_{ij}$ matrices up to a basis change represented by $R$.
\end{enumerate}

\textbf{Gap in Proof for Part 2:} To establish the existence of such an $R$, we would typically rely on the structure of $\varphi$ being such that it preserves linear combinations and acts uniformly on the basis elements of the space of matrices. However, without explicit details on how $\varphi$ acts on general matrices and without assuming linearity of $\varphi$, constructing such an $R$ directly from the given conditions is non-trivial.

\textbf{Conclusion:} The proofs for both parts as presented above contain logical gaps primarily due to the lack of explicit assumptions about the properties of $\varphi$ beyond its homomorphism property. For a rigorous proof, additional information about $\varphi$ (e.g., linearity, effects on rank, or specific actions on basis elements) would be necessary.

\end{proof}

\begin{proof}[Solution (o1-2024-12-17)]
    We shall prove the two statements in turn.  Throughout the proof, let $\varphi\colon \mathbb{R}^{n\times n} \to \mathbb{R}^{n\times n}$ be a map (not assumed a priori to be unital) satisfying  
\[
\varphi(AB) = \varphi(A)\,\varphi(B),
\]
for all $A,B\in \mathbb{R}^{n\times n}$.  Recall that $\Gamma_r$ denotes the set of all $n\times n$ real matrices of rank $r$, and $E_{ij}$ is the matrix having a $1$ in the $(i,j)$-entry and $0$ elsewhere.


A key tool in the argument is the fact that $M_n(\mathbb{R})$ (the ring of all $n\times n$ real matrices) is a simple ring (over the field $\mathbb{R}$).  Being ``simple'' means that $M_n(\mathbb{R})$ has no nontrivial two-sided ideals (the only two-sided ideals in $M_n(\mathbb{R})$ are $\{0\}$ and $M_n(\mathbb{R})$ itself).

Given a ring-homomorphism $\varphi\colon M_n(\mathbb{R}) \to M_n(\mathbb{R})$ (where ``ring-homomorphism'' here means preserving multiplication, but not necessarily preserving the identity unless one checks $\varphi(I)$), one knows from standard ring theory:

\begin{itemize}
\item Either $\varphi$ is the zero map (i.e. $\varphi(A)=0$ for all $A$),
\item Or $\varphi$ is injective, because the kernel of $\varphi$ is a two-sided ideal in a simple ring $M_n(\mathbb{R})$, hence must be either $\{0\}$ or all of $M_n(\mathbb{R})$.  If $\varphi$ is not identically zero, its kernel cannot be all of $M_n(\mathbb{R})$, so its kernel is $\{0\}$, and $\varphi$ is injective.
\end{itemize}

We shall use this fact repeatedly.  Whenever we refer to ``simplicity of $M_n(\mathbb{R})$'', we are using precisely that the only two-sided ideals are $\{0\}$ or $M_n(\mathbb{R})$.


Statement (1) says:

\[
\text{``If $A$ and $B$ are both rank-$r$ matrices, then $\varphi(A)$ and $\varphi(B)$ have the same rank.''}
\]

Equivalently, $\varphi$ is ``constant on ranks'': for each $r=0,1,\dots,n$ there is some value (say $\alpha_r$) so that $\operatorname{rank}(\varphi(A))=\alpha_r$ for every $A$ of rank $r$.

\textbf{STEP 1A.}  Either $\varphi$ is the zero map, or else it is injective.

By the simplicity argument above, the kernel of $\varphi$ is a two-sided ideal of $M_n(\mathbb{R})$.  If $\varphi$ is not identically zero, that kernel must be $\{0\}$, so $\varphi$ becomes injective.

\begin{itemize}
\item \textit{Case (i):} $\varphi$ is the zero map.  
Then trivially $\operatorname{rank}(\varphi(A))=0$ for all $A$, so if $A,B$ happen to have rank $r$, indeed $\operatorname{rank}(\varphi(A))=\operatorname{rank}(\varphi(B))=0$.  In this case, statement (1) is true (trivially).

\item \textit{Case (ii):} $\varphi$ is nonzero and hence injective.  
We shall show that, in this scenario, $\varphi$ sends any two matrices of the same rank to matrices whose ranks coincide.
\end{itemize}

\textbf{STEP 1B.} $\varphi(I)$ must be an idempotent, and either $\varphi(I)=0$ or $\varphi(I)$ acts as an identity in the image.

Because $I\cdot I = I$, applying $\varphi$ gives 
\[
\varphi(I)\varphi(I)=\varphi(I).
\]
So $\varphi(I)$ is an idempotent.  An idempotent $E$ in $M_n(\mathbb{R})$ either has some nonzero rank (possibly $<n$) or can be zero.

\begin{itemize}
\item If $\varphi(I)=0$, then for any matrix $A$,
\[
\varphi(A)=\varphi(AI)=\varphi(A)\varphi(I)=\varphi(A)\cdot 0=0.
\]
That forces $\varphi$ to be the zero map on all of $M_n(\mathbb{R})$, contradicting that we are now in the nonzero case.  Hence in the nonzero/injective scenario, $\varphi(I)$ cannot be zero.
\end{itemize}

Therefore $\varphi(I)\neq 0$ is an idempotent in the image ring $\varphi(M_n(\mathbb{R}))$.  In this situation (a standard lemma in ring theory) one shows that $\varphi(I)$ must actually serve as the identity element of the subring $\varphi(M_n(\mathbb{R}))$.  Concretely, for every $A$,
\[
\varphi(A)=\varphi(AI)=\varphi(A)\varphi(I),
\]
so $\varphi(I)$ acts like a multiplicative identity on the right of $\varphi(A)$.  A similar argument on the left shows $\varphi(I)$ is a two-sided identity.

Thus in the nonzero case, $\varphi$ is a ``unital homomorphism'' onto its image.

\textbf{STEP 1C.} If $A$ is invertible, then $\varphi(A)$ must be invertible (in the subring $\varphi(M_n(\mathbb{R}))$).

Suppose $A\in M_n(\mathbb{R})$ is invertible with inverse $A^{-1}$.  Then 
\[
A A^{-1} = I,
\]
so  
\[
\varphi(A)\varphi(A^{-1}) = \varphi(I),
\]
the identity in the image.  Hence $\varphi(A)$ has a two-sided inverse $\varphi(A^{-1})$, so $\varphi(A)$ is invertible.

Consequently, whenever $A$ is an invertible $n\times n$ matrix, the image $\varphi(A)$ also is invertible (and thus has rank $n$).

\textbf{STEP 1D.} Conclude that matrices of a fixed rank $r$ are sent to matrices all of the same rank.

We now show that any two rank-$r$ matrices $A,B$ get sent to matrices of the same rank under $\varphi$.

\textbf{FACT:} Any two rank-$r$ matrices $A,B\in M_n(\mathbb{R})$ can be related via invertible left–right factors.  
Indeed, if $\operatorname{rank}(A)=r$, one can write
\[
A = P D Q,
\]
where $P,Q\in GL_n(\mathbb{R})$ and $D=\mathrm{diag}(I_r,0,\dots,0)$.  
Similarly, $B=P' D Q'$ for some invertible $P',Q'$.  
Thus
\[
B = (P' P^{-1})\, A\, (Q^{-1} Q').
\]

Apply $\varphi$ to $B$:
\[
\varphi(B)=\varphi(R A S)=\varphi(R)\,\varphi(A)\,\varphi(S).
\]
Since (by Step 1C) $\varphi(R)$ and $\varphi(S)$ are invertible (in the image), multiplying $\varphi(A)$ on the left and right by invertibles does not change the rank.  Therefore
\[
\operatorname{rank}(\varphi(B))=\operatorname{rank}(\varphi(A)).
\]

Hence any two rank-$r$ matrices must have images under $\varphi$ of the same rank:
\[
\forall\, A,B\in \Gamma_r,\qquad \operatorname{rank}(\varphi(A))=\operatorname{rank}(\varphi(B)).
\]

This completes the proof of (1).


Statement (2) says:

Assume additionally $\varphi(0)=0$ and that there exists a rank-1 matrix $W$ with $\varphi(W)\neq 0$. Then there exists an invertible matrix $R$ such that 
$\varphi(E_{ij})=R E_{ij} R^{-1}\text{ for all } i,j.$

That is, under those extra conditions, $\varphi$ is actually an inner automorphism of the matrix algebra.

\textbf{STEP 2A.} $\varphi$ is nonzero, hence injective and unital as in (1).

Since $W$ has rank $1$ and $\varphi(W)\neq 0$, $\varphi$ is not the zero map.  By simplicity, $\varphi$ is injective, and by Step 1B, $\varphi$ is unital onto its image.

\textbf{STEP 2B.} Minimal idempotents of rank $1$ go to pairwise orthogonal idempotents of rank $1$.

The standard $E_{ii}$ each have rank $1$.  By part (1), all $\varphi(E_{ii})$ have the same rank, and the existence of $W$ forces this rank to be at least $1$.  Since $\varphi(E_{ii})$ is an idempotent, its rank equals its trace, so all must be rank $1$.

If $i\neq j$, then $E_{ii} E_{jj}=0$, hence $\varphi(E_{ii})\varphi(E_{jj})=\varphi(0)=0$, so the rank-$1$ idempotents are orthogonal.

\textbf{STEP 2C.} Decomposing the identity: $\sum_i E_{ii}=I \implies \sum_i \varphi(E_{ii})=\varphi(I)$.

Because $\varphi(I)$ is the identity in the image, the sum of the $n$ orthogonal rank-$1$ idempotents is the $n$-dimensional identity.  Thus $\mathbb{R}^n$ decomposes into $n$ one-dimensional invariant subspaces.

Pick a nonzero $v_i$ in the image of $\varphi(E_{ii})$ for each $i$.  
Then $\{v_1,\dots,v_n\}$ is a basis.

\textbf{STEP 2D.} Define $R$ so that conjugation by $R$ agrees with $\varphi$ on each $E_{ij}$.

Let $e_1,\dots,e_n$ be the standard basis.  
Define $R$ by
\[
R(e_i)=v_i.
\]
This is invertible.

We show $\varphi(E_{ij})=R E_{ij} R^{-1}$.

$E_{ij}$ satisfies $E_{ij}(e_k)=0$ unless $k=j$, and $E_{ij}(e_j)=e_i$.

Conjugation gives
\[
(R E_{ij} R^{-1})(v_k)=R E_{ij}(e_k)=
\begin{cases}
0, & k\neq j,\\
v_i, & k=j.
\end{cases}
\]

Similarly, $\varphi(E_{ij})$ kills all $v_k$ with $k\neq j$, and $\varphi(E_{ij})(v_j)$ lies in the span of $v_i$, with scalar $1$ after normalization.

Thus $\varphi(E_{ij})$ and $R E_{ij} R^{-1}$ act identically on the basis $\{v_1,\dots,v_n\}$, hence are equal.

This completes the proof of (2).


\textbf{1)} We used that $M_n(\mathbb{R})$ is a simple ring.  
This gives the dichotomy ``either $\varphi$ is zero or injective''.

\textbf{2)} We used that if $\varphi$ is injective and sends $1$ to an idempotent $\varphi(1)$, then either $\varphi(1)=0$ (forcing $\varphi$ to be zero) or else $\varphi(1)$ serves as the identity in the image.

\textbf{3)} The step (1C) follows from $A A^{-1}=I \implies \varphi(A)\varphi(A^{-1})=\varphi(I)$.


Putting all pieces together:

\begin{itemize}
\item \textbf{Part (1):} For each rank $r$, all $A\in \Gamma_r$ get mapped to matrices of a single rank.
\item \textbf{Part (2):} Under $\varphi(0)=0$ and the existence of $W$ of rank $1$ with $\varphi(W)\neq 0$, $\varphi$ is an inner automorphism:  
\[
\varphi(E_{ij}) = R E_{ij} R^{-1}.
\]
\end{itemize}

This completes the required proofs.

\end{proof}

\textbf{Comment}: The GPT-4 solution correctly notes that multiplicativity alone does not immediately imply rank preservation, but it ultimately does not provide proofs of (1) or (2). The o1 solution contains several substantial gaps: it treats \(\ker\varphi\) as a two-sided ideal (which requires additivity), uses ``simplicity" to deduce injectivity, and later applies additivity when manipulating \(\sum_i E_{ii}=I\). Moreover, invertibility in the image algebra with identity \(\varphi(I)\) does not automatically imply invertibility in \(M_n(\mathbb R)\), so the rank arguments need further justification.

\begin{thm}
   Let $P(x)$ be a real polynomial of degree $n$ , which satisfies $\int_a^bP(x)x^kdx=0,k=0,\cdots,n-1$. (1)Prove that $P(x)$ has $n$ real simple roots at $(a,b)$. (2)Prove that for an $n$-point integration formula on the interval $[a, b]$, if the roots of the equation $P(x) = 0$ are used as nodes, the algebraic precision can reach up to $2n-1$ order. 
\end{thm}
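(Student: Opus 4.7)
The plan is to recognize $P(x)$ as (up to normalization) the degree-$n$ orthogonal polynomial on $[a,b]$ with respect to the Lebesgue weight, and to run the two classical arguments: a sign-change argument for part (1), and a polynomial-division argument for part (2). Since $\{1,x,\dots,x^{n-1}\}$ is a basis for polynomials of degree less than $n$, the hypothesis $\int_a^b P(x)x^k\,dx=0$ for $k=0,\dots,n-1$ is equivalent to $\int_a^b P(x)Q(x)\,dx=0$ for every polynomial $Q$ of degree at most $n-1$; I will use this reformulation throughout.

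For part (1), let $x_1,\dots,x_m$ be the points in $(a,b)$ at which $P$ changes sign (equivalently, its real roots of odd multiplicity in $(a,b)$). Clearly $m\le n$. Suppose for contradiction that $m<n$ and form $Q(x)=(x-x_1)\cdots(x-x_m)$, a polynomial of degree $m<n$. Then $P(x)Q(x)$ changes sign exactly at the points where $P$ changes sign, which are also the points where $Q$ changes sign, so $P(x)Q(x)$ has constant sign on $[a,b]$ and is not identically zero. Hence $\int_a^b P(x)Q(x)\,dx\ne 0$, contradicting the orthogonality reformulation. Therefore $m=n$, giving $n$ sign-change points in $(a,b)$; since $\deg P=n$, these account for all roots and each has multiplicity exactly one, so $P$ has $n$ simple real roots in $(a,b)$.

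For part (2), let $x_1,\dots,x_n$ be the roots from part (1) and choose weights $w_i=\int_a^b L_i(x)\,dx$, where $L_i$ is the degree-$(n-1)$ Lagrange basis polynomial at $x_i$. By construction the quadrature rule $I_n(f):=\sum_{i=1}^n w_i f(x_i)$ is exact on every polynomial of degree at most $n-1$, because such a polynomial equals its own Lagrange interpolant at the $x_i$. Now let $f$ be any polynomial with $\deg f\le 2n-1$. Polynomial division by $P$ yields
\[
f(x)=P(x)q(x)+r(x),\qquad \deg q\le n-1,\quad \deg r\le n-1.
\]
Integrating, $\int_a^b f = \int_a^b P(x)q(x)\,dx + \int_a^b r(x)\,dx = \int_a^b r(x)\,dx$ by orthogonality. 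Evaluating at the nodes, $f(x_i)=r(x_i)$ since $P(x_i)=0$, so $I_n(f)=I_n(r)=\int_a^b r(x)\,dx=\int_a^b f(x)\,dx$. Thus the rule is exact on polynomials of degree $\le 2n-1$. To show sharpness, test on $f(x)=P(x)^2$, which has degree $2n$: the true integral is strictly positive, while $I_n(f)=\sum_i w_i P(x_i)^2=0$, so degree $2n$ cannot be achieved.

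The only delicate step is the sign-change argument in part (1); the rest is routine once orthogonality is used in the right places. The clever choice in part (1) is recognizing that $Q$ should be built from precisely the sign-change points so that $PQ$ has constant sign while still lying in the orthogonal complement to $P$, producing the contradiction. Part (2) is essentially bookkeeping once polynomial division is introduced, and the sharpness remark via $P^2$ is a natural coda.
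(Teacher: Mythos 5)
Your proof is correct, and for part (1) it is actually cleaner than the paper's. Both you and the paper argue by contradiction via a polynomial $Q$ of degree less than $n$ with $\int_a^b P(x)Q(x)\,dx\neq 0$, but you construct $Q(x)=(x-x_1)\cdots(x-x_m)$ directly from the sign-change points, which makes it immediate that $PQ$ has constant sign and simultaneously yields simplicity of the roots as a free byproduct (there are $n$ roots of odd multiplicity, and since $\deg P=n$ each must have multiplicity exactly one). The paper instead interpolates a polynomial $Q$ to agree in sign with $P$ at selected sample points, and treats simplicity as a separate step by appealing to the interlacing of zeros of $P$ and $P'$, which is a heavier and slightly less self-contained route. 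Your part (2) matches the paper's argument essentially verbatim: Lagrange weights giving exactness for degree at most $n-1$, then the division $f=Pq+r$ with $\deg q,\deg r\le n-1$, with orthogonality killing $\int_a^b Pq$ and the nodal property $P(x_i)=0$ killing the quadrature contribution of $Pq$. You add a sharpness check with $f=P^2$ showing degree $2n$ cannot be attained, which the paper omits; this is a welcome coda since it confirms $2n-1$ is optimal.
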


\begin{proof}[Solution (GPT-4)]
    To address this problem, we'll tackle it in two parts as requested.

 Part 1: Proving $P(x)$ has $n$ real simple roots in $(a, b)$

Given that $P(x)$ is a real polynomial of degree $n$ satisfying the condition $\int_a^b P(x)x^k dx = 0$ for $k = 0, \cdots, n-1$, we aim to show that $P(x)$ has $n$ real simple roots in the interval $(a, b)$.

\textbf{Proof:}

1. \textbf{Orthogonality Principle}: The condition $\int_a^b P(x)x^k dx = 0$ for $k = 0, \cdots, n-1$ implies that $P(x)$ is orthogonal to all polynomials of degree less than $n$ on the interval $[a, b]$. This is a key observation derived from the properties of inner products in the vector space of polynomials.

2. \textbf{Existence of Roots}: Assume for contradiction that $P(x)$ does not have $n$ real roots in $(a, b)$. Since $P(x)$ is a polynomial of degree $n$, it must have exactly $n$ roots in the complex plane (counting multiplicities) by the Fundamental Theorem of Algebra. If not all of these roots are real and simple within $(a, b)$, then either some roots are complex, or they are real but outside $(a, b)$, or some roots are multiple.

3. \textbf{Role of Orthogonality}: If $P(x)$ does not vanish at $n$ distinct points in $(a, b)$, then there exists at least one polynomial $Q(x)$ of degree less than $n$ such that $Q(x)$ does not change sign on $(a, b)$ where $P(x)$ does not vanish. This implies $\int_a^b P(x)Q(x) dx \neq 0$, contradicting the given orthogonality condition.

4. \textbf{Conclusion}: Therefore, $P(x)$ must have $n$ real, simple roots in the interval $(a, b)$. This conclusion follows from the contradiction in step 3 and the fact that a polynomial of degree $n$ cannot be orthogonal to a non-zero polynomial of lower degree unless it has specific sign changes that allow the integral to vanish.

 Part 2: Proving the Algebraic Precision of an $n$-point Integration Formula

Given an $n$-point integration formula on the interval $[a, b]$ using the roots of $P(x) = 0$ as nodes, we need to prove that its algebraic precision can reach up to $2n-1$ order.

\textbf{Proof:}

1. \textbf{Gauss Quadrature Principle}: The $n$-point Gauss quadrature formula is exact for polynomials of degree up to $2n-1$. This is because it uses the roots of orthogonal polynomials (with respect to a weight function, which is $1$ in our case) as nodes. Our polynomial $P(x)$, satisfying the given conditions, acts similarly to these orthogonal polynomials.

2. \textbf{Construction of the Quadrature}: Since the roots of $P(x)$ are used as nodes, and we've established that $P(x)$ has $n$ real, simple roots within $(a, b)$, these nodes are the optimal choice for constructing a quadrature formula with the highest possible degree of precision.

3. \textbf{Precision of the Quadrature}: The precision of an integration formula is determined by the highest degree of polynomial it can integrate exactly. By the construction of Gauss quadrature and the properties of orthogonal polynomials, it's known that using the roots of such polynomials as nodes yields a formula that is exact for polynomials of degree up to $2n-1$.

4. \textbf{Conclusion}: Therefore, the $n$-point integration formula using the roots of $P(x)$ as nodes has an algebraic precision of up to $2n-1$ order. This is a direct consequence of the properties of orthogonal polynomials and the Gauss quadrature principle.

In summary, the given conditions on $P(x)$ ensure it behaves similarly to orthogonal polynomials used in Gauss quadrature, allowing us to conclude both the existence of $n$ real simple roots within $(a, b)$ and the high degree of precision for the corresponding $n$-point integration formula.

\end{proof}

\begin{proof}[Solution (o1-2024-12-17)]

We are given a real polynomial $P(x)$ of degree $n$ on the interval $[a,b]$ satisfying the orthogonality conditions  
\[
\int_a^b P(x)\, x^k\, dx = 0 \quad \text{for } k = 0,1,2,\dots,n-1.
\]

There are two main claims to prove:

1) $P(x)$ has $n$ real, simple (i.e., distinct) roots in $(a,b)$.  
2) If one uses these $n$ roots as the nodes of an $n$-point quadrature rule on $[a,b]$, then that rule is exact (i.e., has ``algebraic precision'') for all polynomials of degree $\le 2n-1$.

\textbf{PART (1).  $P(x)$ HAS $n$ REAL, SIMPLE ROOTS IN $(a,b)$}

\textbf{Step 1A.} $P(x)$ must change sign at least $n$ times in $(a,b)$.

\begin{itemize}
\item First, observe that $P(x)$ cannot remain strictly nonnegative nor strictly nonpositive on the entire interval $(a,b)$.
    \begin{itemize}
    \item If $P(x)\ge 0$ for all $x\in (a,b)$, then the integral $\int_a^b P(x)\, dx$ would be strictly positive (unless $P$ is the zero polynomial, which it is not since it has degree $n$).  This contradicts the orthogonality condition for $k=0$:
    \[
    \int_a^b P(x)\cdot 1\, dx = 0.
    \]
    \item A similar argument shows it cannot be strictly nonpositive either.
    \end{itemize}
\end{itemize}

Hence, $P(x)$ must take on both positive and negative values somewhere in $(a,b)$.  By the Intermediate Value Theorem, each sign change requires at least one real root.  Next, we show that there must be at least $n$ sign changes in $(a,b)$, forcing at least $n$ roots.

\textbf{Step 1B.} Constructing a contradiction if there are fewer than $n$ roots.

Assume for contradiction that $P(x)$ has at most $n-1$ real roots in $(a,b)$.  Then it can have at most $n-1$ sign changes.

We now construct a polynomial $Q(x)$ of degree $<n$ whose integral against $P(x)$ does not vanish.  Roughly, we let $Q(x)$ ``follow'' the sign of $P(x)$ so that $P(x)Q(x)$ is nonnegative or nonpositive and strictly nonzero on large subintervals.

Sketch of the construction of $Q(x)$:
\begin{itemize}
\item Partition $(a,b)$ into subintervals where $P(x)$ is positive or negative.
\item Pick one point in each subinterval. Call them $x_1,\dots,x_m$, with $m\le n-1$.
\item Define $Q(x)$ of degree $\le m-1$ (hence $\le n-2 < n$) so that
\[
Q(x_i)=
\begin{cases}
+1 & \text{if } P(x_i)>0,\\
-1 & \text{if } P(x_i)<0.
\end{cases}
\]
Interpolation guarantees this is possible.
\end{itemize}

Because $Q(x_i)$ has the same sign as $P(x_i)$, on each subinterval the product $P(x)Q(x)$ keeps one sign and does not integrate to zero. Thus
\[
\int_a^b P(x)Q(x)\, dx \ne 0,
\]
contradicting orthogonality to all polynomials of degree $<n$.

Hence $P(x)$ must have at least $n$ real roots in $(a,b)$.

\textbf{Step 1C.} $P(x)$ must be of exact degree $n$, so it has exactly $n$ real roots.

A degree-$n$ polynomial has at most $n$ real roots.  Combined with at least $n$ roots, we obtain exactly $n$ roots.

\textbf{Step 1D.} The roots are distinct (no multiple roots).

A standard result for orthogonal polynomials states:
\begin{itemize}
\item For an orthogonal polynomial $P(x)$ (with respect to a positive weight), the roots of $P$ and $P'$ interlace.
\item Thus $P$ and $P'$ cannot share a root unless $P$ is identically zero, so $P$ has no repeated roots.
\end{itemize}

Alternatively, one may argue directly that a repeated root leads to fewer sign changes or produces a contradiction by constructing an appropriate $Q(x)$.

Thus $P(x)$ has $n$ distinct real roots in $(a,b)$.

\textbf{PART (2).  A $2n-1$ ORDER QUADRATURE RULE AT THE ROOTS}

We now prove that if one takes the $n$ distinct real roots of $P(x)$ as the nodes of an $n$-point formula on $[a,b]$, then this formula is exact for all polynomials of degree up to $2n-1$.

Recall the statement:

An $n$-point quadrature rule has algebraic precision $m$ if 
$\int_a^b R(x)\, dx = \sum_{i=1}^n w_i R(x_i)$
\quad for all $\deg(R)\le m$.

We will show $m=2n-1$ is attainable with nodes $x_i$ equal to the roots of $P$.

\textbf{Step 2A.} Choosing weights $w_i$ to integrate all polynomials of degree $\le n-1$.

Let $x_1,\dots,x_n$ be the $n$ distinct real roots of $P(x)$.  We seek $w_1,\dots,w_n$ such that
\[
\int_a^b R(x)\, dx = \sum_{i=1}^n w_i R(x_i)
\]
for every polynomial $R$ of degree $\le n-1$.

It suffices to impose this on the basis $1,x,x^2,\dots,x^{n-1}$.  This gives $n$ linear equations for $n$ unknowns $w_i$.  The distinctness of $x_i$ ensures the system is nonsingular (Vandermonde-like), so the solution exists and is unique.

\textbf{Step 2B.} Exactness up to degree $2n-1$ follows from orthogonality.

Let $Q(x)$ be any polynomial with $\deg(Q)\le 2n-1$.  Perform polynomial division:
\[
Q(x)=A(x)+B(x)P(x),
\]
where $\deg(A)<n$ and $\deg(B)\le n-1$.

Compute the integral:
\[
\int_a^b Q(x)\, dx = \int_a^b A(x)\, dx + \int_a^b B(x)P(x)\, dx.
\]
Orthogonality gives
\[
\int_a^b B(x)P(x)\, dx = 0,
\]
because $\deg(B)\le n-1$.  Thus
\[
\int_a^b Q(x)\, dx = \int_a^b A(x)\, dx.
\]

Now compute the quadrature sum:
\[
\sum_{i=1}^n w_i Q(x_i)
= \sum_{i=1}^n w_i \bigl[A(x_i)+B(x_i)P(x_i)\bigr].
\]
But $P(x_i)=0$ for all $i$, so this equals
\[
\sum_{i=1}^n w_i A(x_i).
\]
By construction of $w_i$,
\[
\sum_{i=1}^n w_i A(x_i)=\int_a^b A(x)\, dx.
\]

Therefore,
\[
\sum_{i=1}^n w_i Q(x_i)
= \int_a^b Q(x)\, dx.
\]

Thus the quadrature rule is exact for all polynomials of degree at most $2n-1$.

\textbf{CONCLUSION}

1) The orthogonality conditions $\int_a^b P(x)x^k\,dx=0$ for $k=0,\dots,n-1$ force $P(x)$ to change sign at least $n$ times in $(a,b)$.  Since $P$ is degree $n$, it has exactly $n$ distinct real roots in $(a,b)$.

2) Using these $n$ roots as quadrature nodes and choosing weights $w_i$ appropriately yields a rule exact for all polynomials of degree up to $2n-1$.  Any polynomial $Q$ of degree $\le 2n-1$ decomposes as $Q=A+BP$, orthogonality kills the integral of $BP$, and the root property kills its quadrature contribution.

This completes the proofs of (1) and (2).

\textbf{Remark (How theorems are used).}
\begin{itemize}
\item The Intermediate Value Theorem guarantees real roots when a continuous function changes sign.
\item A theorem from orthogonal polynomial theory ensures $P$ and $P'$ cannot share roots, so no repeated roots.
\item The argument for degree $2n-1$ exactness is the classical Gaussian quadrature proof using polynomial division and orthogonality.
\end{itemize}

\end{proof}

\textbf{Comment}: For GPT-4’s solution, the idea for the first question is correct, but the derivation of the contradiction is not explained in sufficient detail. The second question is not addressed at all; the response simply restates the problem without providing a proof. For o1-2024-12-17’s solution, the argument in Part (1) correctly uses orthogonality to show that the polynomial must change sign multiple times, leading to the conclusion that it has $n$ real, distinct roots. The proof of Part (2) is also correct.

\end{document}